\documentclass{article}

\usepackage[all]{xy}
\usepackage{amsmath,amssymb,amsthm}

\usepackage{xcolor}

\newcommand{\Ring}{\mathbf{Ring}}

\newcommand{\Set}{\mathbf{Set}}

\newcommand{\mslattice}{\mathcal{S}^{\wedge}_{0,1}}
\newcommand{\dLat}{\mathbf{dLat}}

\newcommand{\V}{\textbf{V}}

\newtheorem{coro}{Corollary}
\newtheorem{defi}{Definition}

\newtheorem{rem}{Remark}
\newtheorem{prop}{Proposition}
\newtheorem{lem}{Lemma}

\newtheorem{theo}{Theorem}

\begin{document}

\title {A syntactical and categorical analysis for Varieties with Right Existentially Definable Factor Congruences}
\author{William Javier Zuluaga Botero}


\maketitle
\begin{abstract}
In this paper we prove that in the context of varieties with Right Existentially Definable Factor Congruences, definability of the property ``$\vec{e}$ and $\vec{f}$ are complementary central elements'', stability by complements and coextensivity of its respective algebraic category, are equivalent. 

\end{abstract}



\section{Introduction}\label{Introduction}

By a variety with $\vec{0}$ and $\vec{1}$ we understand a variety $\mathbf{V}$ for which there are $0$-ary terms $0_{1}$ , ..., $0_{n}$ , $1_{1}$ , ..., $1_{n}$ such that $\mathbf{V} \models \vec{0}\approx \vec{1}\rightarrow x\approx y$, where $\vec{0}=(0_{1}, ..., 0_{n})$ and $\vec{1}=(1_{1}, ..., 1_{n})$. If $\vec{a} \in A^{n}$ and $\vec{b} \in B^{n}$, we write $[\vec{a}, \vec{b}]$ for the n-uple $((a_{1} , b_{1} ), ..., (a_{n} , b_{n})) \in (A \times B)^{n}$. If $A\in \mathbf{V}$ then we say that $\vec{e}=(e_{1}, ..., e_{n})\in A^{n}$ is a \emph{central element} of $A$ if there exists an isomorphism	$\tau: A\rightarrow A_{1}\times A_{2}$, such that $\tau(\vec{e})=[\vec{0}, \vec{1}]$. Also, we say that $\vec{e}$ and $\vec{f}$ are a \emph{pair of complementary central elements} of $A$ if there exists an isomorphism $\tau: A\rightarrow A_{1}\times A_{2}$ such that $\tau(\vec{e})=[\vec{0}, \vec{1}]$ and $\tau(\vec{f})=[\vec{1}, \vec{0}]$. It is fairly known, that direct
product representations $ A\rightarrow A_{1}\times A_{2}$ of an algebra $A$ are closely related to the concept of factor congruence. A pair of congruences $(\theta, \delta)$ of an algebra $A$ is a pair of complementary factor congruences of $A$, if $\theta \cap \delta = \Delta$ and $\theta \circ \delta = \nabla$. In such a case, $\theta$ and $\delta$ are called \emph{factor congruences}. In most cases, the direct decompositions of an algebra are not unique; moreover, in general the pair $(\vec{e}, \vec{f})$ of complementary central elements does not determine the pair of complementary factor congruences  $(ker(\pi_{1}\tau ), ker(\pi_{2}\tau))$, where the $\pi_{i}'s$ are the canonical projections and $\tau$ is the isomorphism between $A$ and $A_{1}\times A_{2}$. We call such a property the \emph{determining property} (DP).
\begin{itemize}
\item[(DP)] For every pair $(\vec{e}, \vec{f})$ of complementary central elements, there is a unique pair $(\theta, \delta)$ of complementary factor congruences such that, for every $i = 1, ..., n$
\begin{center}
\begin{tabular}{ccc}
$(e_{i} , 0_{i} ) \in \theta$ and $(e_{i} , 1_{i} ) \in \delta$ & and & $(f_{i} , 0_{i} ) \in \delta$ and $(f_{i} , 1_{i} ) \in \theta$ 
\end{tabular}
\end{center}
\end{itemize}

Observe that (DP) is in some sense, the most general condition guaranteeing that central elements have all the information about direct product decompositions in the variety. 
\\

Let us consider the following properties.

\begin{enumerate}
\item[(L)] There exists a first order formula $\rho(\vec{z},x,y)$ such that, for every $A,B\in \mathbf{V}$, $a,b\in A$ and $c,d\in B$ 
\[A\times B\models \rho([\vec{0},\vec{1}], (a,c), (b,d))\;\textrm{iff}\; a=b \] 
\item[(R)] There exists a first order formula $\lambda(\vec{z},x,y)$ such that, for every $A,B\in \mathbf{V}$, $a,b\in A$ and $c,d\in B$ 
\[A\times B\models \lambda([\vec{0},\vec{1}], (a,c), (b,d))\;\textrm{iff}\; c=d \]
\end{enumerate} 

In \cite{SV2009} the following result was proved.
\begin{theo}\label{key Theorem about BFC}
Let $\mathbf{V}$ be a variety with $\vec{0}$ and $\vec{1}$. Then, each of the properties (DP), (L) and (R), are equivalent to $\mathbf{V}$ has Boolean factor congruences (BFC), i.e., the set of factor congruences of any algebra of $\mathbf{V}$ is a Boolean sublattice of its congruence lattice. 
\end{theo}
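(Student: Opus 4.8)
The plan is to prove the cyclic chain of implications $(\textrm{BFC}) \Rightarrow (\textrm{DP}) \Rightarrow (\textrm{L}) \Rightarrow (\textrm{BFC})$ together with the symmetric route through $(\textrm{R})$, since the roles of the two factors are interchangeable and any argument producing $\rho$ for the left-hand coordinate produces $\lambda$ for the right-hand one by swapping the two projections. The conceptual heart of the whole equivalence is the observation that in a variety with $\vec 0$ and $\vec 1$, a pair $(\vec e,\vec f)$ of complementary central elements always arises from \emph{some} isomorphism $\tau\colon A\to A_1\times A_2$, and hence from \emph{some} pair of complementary factor congruences; what BFC guarantees is that the factor congruences form a Boolean sublattice, which is exactly the algebraic content needed to pin down that pair uniquely and to phrase membership in it by a first-order formula.

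First I would establish $(\textrm{BFC}) \Rightarrow (\textrm{DP})$. Given $(\vec e,\vec f)$ complementary central, fix one witnessing isomorphism $\tau$ and set $\theta=\ker(\pi_1\tau)$, $\delta=\ker(\pi_2\tau)$; these satisfy $\theta\cap\delta=\Delta$, $\theta\circ\delta=\nabla$ and the required incidences with $\vec 0,\vec 1$. For uniqueness I would suppose $(\theta',\delta')$ is another complementary factor pair satisfying the same incidences and argue, using that the factor congruences sit inside a \emph{distributive} (indeed Boolean) sublattice of $\mathrm{Con}(A)$, that the incidence conditions force $\theta'=\theta$ and $\delta'=\delta$: in a Boolean lattice a complemented element is determined by a separating family of congruence classes, and the conditions $(e_i,0_i)\in\theta$, $(e_i,1_i)\in\delta$ precisely separate the two factors. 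Distributivity is what prevents two distinct complementary pairs from producing the same central data.

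Next, for $(\textrm{DP}) \Rightarrow (\textrm{L})$ the strategy is to produce the formula $\rho$ explicitly from the determining property applied to the canonical central element of a binary product. In $A\times B$ the element $[\vec 0,\vec 1]$ is complementary-central with partner $[\vec 1,\vec 0]$, and (DP) hands us a canonical $\theta$ whose $\vec 0$-side is exactly the first factor; the content of (L) is that ``$a=b$'' in the left coordinate is equivalent to $((a,c),(b,d))\in\theta$. I would then invoke the fact, standard in this circle of ideas and available once factor congruences are canonically determined, that this canonically determined $\theta$ is \emph{definable} by a first-order formula with parameters $\vec z=[\vec 0,\vec 1]$; this is where I expect the main obstacle, since turning a semantic ``canonical factor congruence'' into a genuine first-order formula uniform across all $A,B\in\mathbf V$ requires exhibiting the congruence as definable by a concrete term-built formula rather than merely existing. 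The closing implication $(\textrm{L}) \Rightarrow (\textrm{BFC})$ would run by using $\rho$ to define, inside any single algebra, the factor congruence attached to a given central element, showing these definable relations are closed under the Boolean operations (meet, join, complement) and hence that the factor congruences form a Boolean sublattice of $\mathrm{Con}(A)$. The crux throughout is the definability step in $(\textrm{DP}) \Rightarrow (\textrm{L})$: the equivalence of the three combinatorial/lattice-theoretic conditions with each other is comparatively formal, but bridging to a \emph{single first-order formula} valid in every algebra of the variety is the substantive point, and I would attack it by writing $\rho$ as an existential formula asserting the existence of witnesses exhibiting $a$ and $b$ as agreeing on the first factor, exactly as the determining property guarantees.
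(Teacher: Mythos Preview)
The paper does not supply a proof of this theorem at all: it is quoted from \cite{SV2009} as background, so there is no in-paper argument to compare your proposal against. That said, your sketch has two concrete problems that would need to be fixed before it could stand on its own.

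First, your claim that the route through $(\mathrm{R})$ is obtained from the route through $(\mathrm{L})$ ``by swapping the two projections'' is exactly what the paper warns is \emph{not} routine: immediately after stating the theorem the author remarks that the equivalence of $(\mathrm{L})$ and $(\mathrm{R})$ ``is not trivial, since in general $\vec 0$ and $\vec 1$ are not interchangeable,'' and then gives the example of bounded join semilattices where an equational formula works for $(\mathrm{L})$ but no positive or existential formula exists for $(\mathrm{R})$. So the symmetry you invoke is illusory; the formulas $\rho$ and $\lambda$ are not obtained from one another by a uniform syntactic swap, and a genuine argument (the one in \cite{SV2009}) is needed to pass between $(\mathrm{L})$ and $(\mathrm{R})$.

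Second, in the step $(\mathrm{DP})\Rightarrow(\mathrm{L})$ you propose to ``write $\rho$ as an existential formula asserting the existence of witnesses.'' This overshoots: the theorem only promises \emph{some} first-order $\rho$, and the whole point of the present paper is that getting an existential (or positive, or equational) formula is a strictly stronger condition that singles out special subclasses of BFC varieties. If an existential $\rho$ were always available from $(\mathrm{DP})$ alone, the distinctions RexDFC/LexDFC/TexDFC that drive the paper would collapse. Your instinct that definability is the crux is correct, but the actual construction in \cite{SV2009} is more delicate and does not in general yield a formula of low quantifier complexity. Your arguments for $(\mathrm{BFC})\Rightarrow(\mathrm{DP})$ and $(\mathrm{L})\Rightarrow(\mathrm{BFC})$ are plausible outlines but would also need substantially more detail to be complete.
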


In the same paper it was proved that the existence of the formulas of items (L) and (R) are equivalent (which is not trivial, since in general $\vec{0}$ and $\vec{1}$ are not interchangeables). Nevertheless, such equivalence does not preserve the complexity of the formulas. To illustrate this situation, consider the variety $\mathcal{S}_{01}^{\vee}$ of bounded join semilattices. In \cite{BV2013}, it was proved that $\varphi(x,y,z)= x\vee z \approx y\vee z$ satisfies (L) but there is no positive nor existential formula satisfying (R). This fact, motivates the necessity of introduce several definitions in terms of the complexity of the formulas involved.
\\

We say that a variety $\mathbf{V}$ with BFC has \emph{right existentially definable factor congruences} (RexDFC) if the formula satisfying (R) is existential. Analogously, if the formula satisfying (L) is existential, we say that $\mathbf{V}$ has \emph{left existentially definable factor congruences} (LexDFC). If $\mathbf{V}$ has RexDFC and LexDFC, we say that $\mathbf{V}$ has \emph{twice existentially definable factor congruences} (TexDFC). Similar definitions arise when the considered formula is positive or equational (a finite conjunction of equations). When the formula is positive, we use the acronyms RpDFC, LpDFC and TpDFC to mean the variety has \emph{right positively definable factor congruences}, \emph{left positively definable factor congruences} and \emph{twice positively definable factor congruences}, respectively. 
\\

The following result, proved in \cite{S2010}, shows that there exist a relation linking some of the definability conditions above mentioned.

\begin{lem}[\cite{S2010}]\label{Existential implies Positive}
For every variety $\mathbf{V}$ with BFC the following holds:
\begin{enumerate}
\item RexDFC implies RpDFC.
\item LexDFC implies LpDFC.
\item TexDFC implies TpDFC.
\end{enumerate} 
\end{lem}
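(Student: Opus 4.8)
The plan is to show that if the existential formula $\lambda(\vec{z},x,y)$ witnesses (R), then one can manufacture from it a positive formula witnessing the same property, and likewise in the left case and combined case. So I must recall the shape of a typical existential witness: $\lambda$ has the form $\exists \vec{w}\,\bigwedge_j \big(s_j(\vec{z},x,y,\vec{w})\approx t_j(\vec{z},x,y,\vec{w})\big)$, possibly with the conjunction replaced by a quantifier-free matrix that is a conjunction of equations after prenexing (in an algebraic language there are no relation symbols, so atomic formulas are equations). The key semantic fact I would lean on is the biconditional in (R): on any product $A\times B$ and at the distinguished tuple $[\vec0,\vec1]$, satisfaction of $\lambda$ is equivalent to $c=d$. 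The strategy is to relocate the existential quantifiers so that the resulting formula is positive without disturbing this biconditional on products.

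First I would analyse how the quantifiers behave under the product decomposition. Evaluating $\lambda([\vec0,\vec1],(a,c),(b,d))$ in $A\times B$, each existential witness $\vec w=(w^A,w^B)$ splits coordinatewise, and because the defining tuple is $[\vec0,\vec1]$ the two factors decouple: the $A$-side sees $\vec0$ and the $B$-side sees $\vec1$. I would make this precise by showing that truth of the equational matrix in the product reduces to a pair of conditions, one about the $A$-coordinates and one about the $B$-coordinates, and that the net effect is exactly $c=d$. The aim is to read off from this analysis a positive (existential) replacement: intuitively, the equations can be rearranged using the terms $0_i$ and $1_i$ so that no negation or implication is ever needed, since the matrix was already a conjunction of equations. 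Concretely I would define a candidate positive formula by taking the same matrix, re-expressing any free occurrence of the auxiliary variables through the distinguished constants where the decoupling forces their value, and then verifying the biconditional directly on an arbitrary product.

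The three items are not independent: item (3) should follow by conjoining the positive witnesses produced for (1) and (2), once one checks that a conjunction of two positive formulas is positive and that the conjunction simultaneously captures $c=d$ and $a=b$, hence witnesses TpDFC. So the bulk of the work is items (1) and (2), and by the symmetry between $\vec0$ and $\vec1$ (interchanging the roles of the two factors and of (L) and (R)) item (2) should reduce to item (1) by a dualisation argument, which I would state carefully rather than re-derive. Throughout I may invoke Theorem~\ref{key Theorem about BFC}, so that having BFC lets me treat factor congruences as a Boolean sublattice and move freely between the congruence-theoretic and the term-level descriptions of central elements.

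The hard part will be the passage from ``existential'' to ``positive'' at the level of the matrix: a priori an existential formula in prenex form has a quantifier-free part that, in a purely equational signature, is a Boolean combination of equations and may contain negated equations, so it is \emph{not} automatically positive. The real content is to argue that on products evaluated at $[\vec0,\vec1]$ any negated equation in the matrix is redundant and can be dropped (or absorbed), using the specific geometry of the decomposition forced by the central tuple together with BFC. I would isolate this as the crucial lemma: that the witnessing behaviour of $\lambda$ on products is insensitive to the negative part of its matrix, so that deleting the negated atoms yields a positive formula with the same extension on all products of the form $A\times B$ at $[\vec0,\vec1]$. Verifying that deletion preserves the biconditional in both directions — that no genuine separating power is lost — is where I expect the technical difficulty to concentrate.
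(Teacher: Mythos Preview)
The paper does not supply a proof of this lemma at all: it is quoted verbatim from \cite{S2010} and used as a black box (together with Lemma~\ref{Positive implies principal congruence}) in the proof of Lemma~\ref{Definability by principal congruences}. So there is no ``paper's own proof'' to compare against; your proposal is an attempt to reconstruct the cited result.

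As a reconstruction, your plan has a genuine gap exactly where you yourself locate the difficulty. Passing from an existential witness $\lambda=\exists\vec{w}\,\psi$ to the formula obtained by deleting the negated atoms of $\psi$ gives a \emph{weaker} formula $\lambda^{+}$: you only get $\lambda\Rightarrow\lambda^{+}$, so the implication $c=d\Rightarrow A\times B\models\lambda^{+}([\vec0,\vec1],(a,c),(b,d))$ survives, but the converse direction --- that $\lambda^{+}$ still forces $c=d$ --- does not follow from anything you have written. Your ``crucial lemma'' that the negated atoms are redundant on products evaluated at $[\vec0,\vec1]$ is precisely the content of the result, and you offer no mechanism for it beyond the hope that BFC and the decoupling of coordinates will make it true. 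That is not a proof sketch; it is a restatement of the target.

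A second issue is your reduction of item~(2) to item~(1) by ``dualisation''. The paper explicitly warns, immediately after Theorem~\ref{key Theorem about BFC}, that the equivalence between (L) and (R) \emph{does not preserve the complexity of the formulas} (the example of $\mathcal{S}^{\vee}_{01}$ is given for exactly this reason). So a symmetry argument swapping $\vec0$ and $\vec1$ cannot be invoked without further justification; the left and right cases genuinely require separate treatment at this level of granularity.
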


The aim of this work is to show the link that there exists between determined syntactical, structural and categorical conditions in a variety with RexDFC. Namely, we prove that definability of the property ``$\vec{e}$ and $\vec{f}$ are complementary central elements'', stability by complements and the coextensivity of its respective algebraic category, are equivalent.
\\

This work is organized as follows. The contents of Section \ref{Preliminaries}, are devoted to introduce the basics in varieties with BFC which are necessary to read this work. In Section \ref{A characterization for RexDFC}, we characterize varieties with RexDFC as those varieties with BFC such that the factor congruences are basically, principal congruences. In Section \ref{Coextensivity,  definability and stabitlity by complements}, we prove the main result of this work (Theorem \ref{scc is equivalent to coextensivity}) in which we show that for varieties with RexDFC, stability by complements, definability of $\vec{e}\diamond_{A}\vec{f}$ by a $\exists \bigwedge p=q$-formula and coextensivity of the algebraic category associated to the variety, are equivalent. Finally, in Section \ref{RexDFC and stability by complements induce homomorphisms of Boolean algebras} we prove that varieties with RexDFC stable by complements have the Fraser-Horn Property and, in particular, that this fact entails that homomorphisms, when restricted to central elements, are
Boolean algebra homomorphisms.

For standard notions in universal algebra the reader may consult \cite{MMT1987}.

\section{Preliminaries}\label{Preliminaries}

\subsection{Notation and basic results}\label{Notation and basic results}

If $A$ is an algebra of a given type, we denote the congruence lattice of $A$ by $Con(A)$. As usual, the join operation of $Con(A)$ is denoted by $\vee$. If $f:A\rightarrow B$ is a homomorphism and $n$ is a fixed natural number, we write $f$ to denote the application $f^{n}:A^{n}\rightarrow B^{n}$. Thereby, if $\vec{e}\in A^{n}$, then $f(\vec{e})$ must be taken as $(f(e_{1}),...,f(e_{n}))\in B^{n}$. We write $Ker(f)$ for the congruence of $A$, defined by $\{(a,b)\in A\times A\mid f(a)=f(b)\}$. The universal congruence on $A$ is denoted by $\nabla^{A}$ and $\Delta^{A}$ denotes the identity congruence on A (or simply $\nabla$ and $\Delta$ when the context is clear). If $S\subseteq A$, we write $\theta^{A}(S)$ for the least congruence containing $S\times S$. If $\vec{a}, \vec{b} \in A^{n}$,
then $\theta^{A}(\vec{a}, \vec{b})$ denotes the congruence generated by $C=\{(a_{i}, b_{i}) \mid 1 \leq i \leq n\}$. If $\vec{a}, \vec{b} \in A^{n}$ and $\theta \in Con(A)$, we write $[\vec{a},\vec{b}]\in \theta$ to express that $(a_{i} , b_{i}) \in \theta$, for
$i = 1,..., n$. We use $FC(A)$ to denote the set of factor congruences of A. A
variety $\mathbf{V}$ with $\vec{0}$ and $\vec{1}$ has BFC, if satisfies any of the equivalent conditions of Theorem \ref{key Theorem about BFC}. We write $\theta \diamond \delta$ in $Con(A)$ to denote that $\theta$ and $\delta$ are complementary factor congruences of $A$. If $\theta \in FC(A)$, we use $\theta^{\star}$ to denote the factor complement of $\theta$. If $\theta, \delta\in Con(A)$ we say that $\theta$ and $\delta$ \emph{permute} if $\theta \circ \delta= \delta \circ \theta$.
\\

A \emph{system over} $Con(A)$ is a $2k$-ple $(\theta_{1},...,\theta_{k}, x_{1},...,x_{k})$ such that $(x_{i},x_{j})\in \theta_{i}\vee\theta_{j}$, for every $i,j$. A \emph{solution} of the system $(\theta_{1},...,\theta_{k}, x_{1},...,x_{k})$ is an element $x\in A$ such that $(x,x_{i})\in \theta_{i}$ for every $1\leq i\leq k$. Observe that if $\theta_{1}\cap ... \cap \theta_{k}=\Delta^{A}$, then the system $(\theta_{1},...,\theta_{k}, x_{1},...,x_{k})$ has at most one solution.

\begin{lem}\label{basics about systems} 
Let $\theta$ and $\delta$ be congruences of $A$. The following are equivalent:
\begin{enumerate}
\item $\theta$ and $\delta$ permute.
\item $\theta \vee \delta =\theta \circ \delta$
\item For every $x,y\in A$, the system $(\theta, \delta, x,y)$ has a solution.
\end{enumerate}
\end{lem}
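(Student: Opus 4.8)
The plan is to reduce all three conditions to the single relational identity $\theta\vee\delta=\theta\circ\delta$. Before separating cases I would record two facts valid for arbitrary congruences. First, since $\theta\vee\delta$ is a transitive relation containing both $\theta$ and $\delta$, one always has $\theta\circ\delta\subseteq\theta\vee\delta$ and, symmetrically, $\delta\circ\theta\subseteq\theta\vee\delta$. Second, unwinding the definitions, an element $z$ solves the system $(\theta,\delta,x,y)$ exactly when $(x,z)\in\theta$ and $(z,y)\in\delta$, that is, precisely when $(x,y)\in\theta\circ\delta$; hence the system $(\theta,\delta,x,y)$, which by definition presupposes $(x,y)\in\theta\vee\delta$, admits a solution if and only if $(x,y)\in\theta\circ\delta$.

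For $(1)\Rightarrow(2)$ I would show that permutability forces $\theta\circ\delta$ to be a congruence. It is reflexive because $\theta$ and $\delta$ are. For symmetry, if $(a,b)\in\theta\circ\delta$ then, passing to inverses and using that congruences are symmetric, $(b,a)\in\delta\circ\theta=\theta\circ\delta$ by hypothesis. For transitivity I would compute $(\theta\circ\delta)\circ(\theta\circ\delta)=\theta\circ(\delta\circ\theta)\circ\delta=\theta\circ(\theta\circ\delta)\circ\delta=(\theta\circ\theta)\circ(\delta\circ\delta)=\theta\circ\delta$, using permutability in the middle step and the idempotence $\theta\circ\theta=\theta$, $\delta\circ\delta=\delta$ of reflexive–transitive relations at the end; compatibility with the operations is automatic for a composite of compatible relations. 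Thus $\theta\circ\delta$ is a congruence containing both $\theta$ and $\delta$, so $\theta\vee\delta\subseteq\theta\circ\delta$, and together with the always-valid reverse inclusion this gives $\theta\vee\delta=\theta\circ\delta$.

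The remaining implications are bookkeeping on top of the two initial observations. For $(2)\Rightarrow(1)$ I would argue by symmetry: if $\theta\vee\delta=\theta\circ\delta$ then $\theta\circ\delta$ is symmetric, being a congruence, so $\theta\circ\delta=(\theta\circ\delta)^{-1}=\delta^{-1}\circ\theta^{-1}=\delta\circ\theta$, which is exactly permutability. Finally $(2)\Leftrightarrow(3)$ follows from the translation recorded at the outset: condition (3) says precisely that every pair $(x,y)\in\theta\vee\delta$ lies in $\theta\circ\delta$, i.e. $\theta\vee\delta\subseteq\theta\circ\delta$, and since the reverse inclusion is automatic this is equivalent to (2). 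I expect the only genuinely delicate point to be the transitivity computation in $(1)\Rightarrow(2)$, where the associative rearrangement of the composites together with the single use of the permutability hypothesis carries the whole argument; everything else is immediate once solvability of the system has been rephrased as membership in $\theta\circ\delta$.
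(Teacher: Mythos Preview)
Your argument is correct. The paper does not supply a proof of this lemma at all---it is stated as a standard fact from universal algebra and left unproved---so there is nothing to compare against; your treatment, including the careful reading of condition~(3) through the paper's definition of a system (which builds in the hypothesis $(x,y)\in\theta\vee\delta$), is exactly what one would expect.
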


Given two sets $A_{1}$,$A_{2}$ and a relation $\delta$ in $A_{1} \times A_{2}$,  we say that $\delta$ \emph{factorizes} if there exist sets $\delta_{1} \subseteq A_{1} \times A_{1}$ and $\delta_{2} \subseteq A_{2} \times A_{2}$ such that $\delta=\delta_{1}\times \delta_{2}$, where \[\delta_{1}\times \delta_{2}=\{((a,b),(c,d))\mid (a,c)\in \delta_{1}, (b,d)\in \delta_{2}\}\]
So, if $\delta\in Con(A_{1}\times A_{2})$ factorizes in $\delta_{1}, \delta_{2}$ it follows that $\delta_{i}\in Con (A_{i})$, for $i=1,2$.
\begin{lem}[\cite{BB1990}]\label{FC Factors equivalent BFC}
Let $\V$ be a variety. The following are equivalent:
\begin{enumerate}
\item $\V$ has BFC.
\item $\V$ has factorable factor congruences. I.e. If $A,B\in \V$ and $\theta\in FC(A\times B)$, then $\theta$ factorizes.
\end{enumerate}
\end{lem}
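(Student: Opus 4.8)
The plan is to run both implications through the two canonical complementary factor congruences of a binary product, namely $\eta_1=Ker(\pi_1)$ and $\eta_2=Ker(\pi_2)$ attached to the projections $\pi_i$ of $A\times B$. These satisfy $\eta_1=\Delta_A\times\nabla_B$, $\eta_2=\nabla_A\times\Delta_B$, $\eta_1\cap\eta_2=\Delta$ and $\eta_1\circ\eta_2=\nabla$, and I will combine them with the elementary identities $(\alpha_1\times\alpha_2)\cap(\beta_1\times\beta_2)=(\alpha_1\cap\beta_1)\times(\alpha_2\cap\beta_2)$ and $(\alpha_1\times\alpha_2)\circ(\beta_1\times\beta_2)=(\alpha_1\circ\beta_1)\times(\alpha_2\circ\beta_2)$ for product relations.

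For $(1)\Rightarrow(2)$, let $\theta\in FC(A\times B)$. Since $\V$ has BFC, $FC(A\times B)$ is a distributive (indeed Boolean) sublattice of $Con(A\times B)$, so meets and joins of factor congruences may be computed in $Con(A\times B)$ and the distributive law applies to $\theta,\eta_1,\eta_2$. Using $\eta_1\cap\eta_2=\Delta$ this yields $\theta=(\theta\vee\eta_1)\cap(\theta\vee\eta_2)$. Now $\theta\vee\eta_1$ is again a factor congruence, by closure under $\vee$, and it contains $\eta_1=\Delta_A\times\nabla_B$; by the correspondence between congruences above $\eta_1$ and congruences of $(A\times B)/\eta_1\cong A$, it must be a cylinder $\delta_1\times\nabla_B$ with $\delta_1\in Con(A)$, and symmetrically $\theta\vee\eta_2=\nabla_A\times\delta_2$. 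Intersecting gives $\theta=\delta_1\times\delta_2$, so $\theta$ factorizes.

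For $(2)\Rightarrow(1)$, fix $A\in\V$ and $\theta\in FC(A)$ with factor complement $\theta^\star$; identifying $A$ with $A/\theta^\star\times A/\theta=:A_1\times A_2$ turns $\theta,\theta^\star$ into the canonical pair, say $\theta=\nabla_1\times\Delta_2$ and $\theta^\star=\Delta_1\times\nabla_2$. Given any $\psi\in FC(A)$, hypothesis $(2)$ factorizes it as $\psi=\psi_1\times\psi_2$, and factorizing $\psi^\star=\rho_1\times\rho_2$ as well, the identities above let me read off $\psi_i\cap\rho_i=\Delta_i$ and $\psi_i\circ\rho_i=\nabla_i$ coordinatewise, so $\psi_i\diamond\rho_i$ in $Con(A_i)$ and each $\psi_i$ is a factor congruence of $A_i$. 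A direct computation then gives $\theta\cap\psi=\psi_1\times\Delta_2$ with complement $\rho_1\times\nabla_2$, and $\theta\vee\psi=\nabla_1\times\psi_2$ with complement $\Delta_1\times\rho_2$, proving that $FC(A)$ is closed under $\cap$ and $\vee$; the same product bookkeeping yields the distributive law $\theta\cap(\psi\vee\chi)=(\theta\cap\psi)\vee(\theta\cap\chi)$. Since $FC(A)$ is thereby a bounded, distributive, complemented sublattice of $Con(A)$, it is Boolean, which is $(1)$.

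Two ingredients deserve care. First, in $(2)\Rightarrow(1)$ I must know that the join of two product congruences is the product of their joins, $(\alpha_1\times\alpha_2)\vee(\beta_1\times\beta_2)=(\alpha_1\vee\beta_1)\times(\alpha_2\vee\beta_2)$; one inclusion is immediate, and the other is a short chain argument that moves one coordinate at a time, using reflexivity of the factors to keep the other coordinate fixed. Second, I must check throughout that the lattice operations of $FC(A)$ coincide with those inherited from $Con(A)$, and that the factors of a factor congruence are themselves factor congruences. I expect the main obstacle to be precisely this bookkeeping in $(2)\Rightarrow(1)$: organizing the coordinatewise computations so that closure under meet and join, the explicit complements, and the distributive law all drop out of the same product identities, rather than any single deep step.
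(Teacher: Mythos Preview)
The paper does not prove this lemma: it is stated with the attribution \cite{BB1990} and no argument is given, so there is no in-paper proof to compare against. Your sketch is a correct self-contained proof of the Bigelow--Burris result.

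A couple of minor remarks on your write-up. In the $(1)\Rightarrow(2)$ direction you note that $\theta\vee\eta_1$ is again a factor congruence ``by closure under $\vee$''; this is true but unnecessary, since any congruence containing $\eta_1=\Delta_A\times\nabla_B$ already has the cylinder form $\delta_1\times\nabla_B$ by the correspondence theorem. In the $(2)\Rightarrow(1)$ direction your distributivity check $\theta\cap(\psi\vee\chi)=(\theta\cap\psi)\vee(\theta\cap\chi)$ is carried out with $\theta$ being the congruence used to split $A$; since $\theta$ was arbitrary this does cover all triples, but you should say so explicitly. Also, when you conclude that $FC(A)$ is Boolean, what you actually use is that factor complements are lattice complements (since $\theta\circ\delta=\nabla$ forces $\theta\vee\delta=\nabla$); uniqueness of complements then follows from distributivity, so you need not separately verify uniqueness of factor complements. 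With these small clarifications the argument is complete.
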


We say that a variety has the \emph{Fraser-Horn property} (see \cite{FH1970}) (FHP) if every congruence on a (finite) direct product of algebras factorizes. Given a variety $\V$ and a set of variables $X$, we use $\textbf{F}_{\V}(X)$ to denote the free algebra of $\V$ freely generated by $X$ (or simply $\textbf{F}(X)$, if the context is clear). If $X = \{x_{1} , . . . , x_{n} \}$, then we use $\textbf{F}_{\V}(x_{1} , . . . , x_{k})$ instead of $\textbf{F}_{\V} (\{x_{1} , . . . , x_{k} \})$. 
\\

As a final remark, we should recall that all the algebras considered along this work always will be assumed as algebras with finite $m$-ary function symbols and its type (unless necessary), will be omitted.

\subsection{Generalities about Varieties with BFC}\label{Generalities about Varieties with DFC}


Let $\mathbf{V}$ be a variety with $\vec{0}$ and $\vec{1}$ and suppose that has BFC. For every $A\in \mathbf{V}$, we write $Z(A)$ to denote the set of central elements of $A$ and $\vec{e}\diamond_{A} \vec{f}$ to denote that $\vec{e}$ and $\vec{f}$ are complementary central elements of $A$. If $\vec{e}$ is a central element of $A$ we write $\theta_{\vec{0}, \vec{e}}^{A}$ and $\theta_{\vec{1}, \vec{e}}^{A}$ for the unique pair of complementary factor congruences satisfying $[\vec{e}, \vec{0}] \in \theta_{\vec{0}, \vec{e}}^{A}$ and  $[\vec{e}, \vec{1}]\in \theta_{\vec{1}, \vec{e}}^{A}$. It follows that $\vec{0}$ and $\vec{1}$ are central elements in every algebra $A$ and the factor congruences associated to them are $\theta_{\vec{0}, \vec{0}}^{A}=\Delta^{A}$, $\theta_{\vec{1}, \vec{0}}^{A}=\nabla^{A}$ and $\theta_{\vec{0}, \vec{1}}^{A}=\nabla^{A}$, $\theta_{\vec{1}, \vec{1}}^{A}=\Delta^{A}$, respectively. If there is no place to confusion, we write $\theta_{\vec{0}, \vec{e}}^{A}$ and $\theta_{\vec{1}, \vec{e}}^{A}$ simply as $\theta_{\vec{0}, \vec{e}}$ and $\theta_{\vec{1}, \vec{e}}$. Since $\mathbf{V}$ has BFC, factor complements are unique so we obtain the following fundamental result

\begin{theo}\label{Bijection betwen centrals and factor congruences}
Let $\mathbf{V}$ be a variety with DFC. The map $g:Z(A)\rightarrow FC(A),$ defined by $g(\vec{e})=\theta_{\vec{0}, \vec{e}}^{A}$ is a bijection and its inverse $h:FC(A)\rightarrow Z(A)$ is defined by $h(\theta)=\vec{e}$, where $\vec{e}$ is the only $\vec{e}\in A^{n}$ such that $[\vec{e}, \vec{0}]\in \theta$ and $[\vec{e}, \vec{1}]\in \theta^{\ast}$.
\end{theo}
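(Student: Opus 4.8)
The plan is to prove the two well-definedness claims and then check that $g$ and $h$ are mutually inverse, using throughout that $\mathbf{V}$ has BFC, so that property (DP) of Theorem \ref{key Theorem about BFC} is available and factor complements are unique. First I would verify that $g$ is well defined. Given $\vec{e}\in Z(A)$, a witnessing isomorphism $\tau\colon A\to A_{1}\times A_{2}$ with $\tau(\vec{e})=[\vec{0},\vec{1}]$ yields a complementary central element $\vec{f}=\tau^{-1}([\vec{1},\vec{0}])$, so that $\vec{e}\diamond_{A}\vec{f}$; then (DP) provides the unique complementary pair $(\theta_{\vec{0},\vec{e}}^{A},\theta_{\vec{1},\vec{e}}^{A})$, so $g(\vec{e})=\theta_{\vec{0},\vec{e}}^{A}$ is an unambiguous element of $FC(A)$, and uniqueness of factor complements under BFC gives $(\theta_{\vec{0},\vec{e}}^{A})^{\star}=\theta_{\vec{1},\vec{e}}^{A}$.

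Next I would establish that $h$ is well defined. For existence, I would take the isomorphism $\tau\colon A\to A/\theta\times A/\theta^{\star}$ associated to $\theta\diamond\theta^{\star}$ and set $\vec{e}=\tau^{-1}([\vec{0},\vec{1}])$, which is central by definition. Since $\tau$ is a homomorphism it preserves the constants $0_{i},1_{i}$, so comparing the coordinates of $\tau(e_{i})$, $\tau(0_{i})$ and $\tau(1_{i})$ yields $(e_{i},0_{i})\in Ker(\pi_{1}\tau)=\theta$ and $(e_{i},1_{i})\in Ker(\pi_{2}\tau)=\theta^{\star}$, i.e. $[\vec{e},\vec{0}]\in\theta$ and $[\vec{e},\vec{1}]\in\theta^{\star}$. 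For uniqueness, if $\vec{e}$ and $\vec{e}\,'$ both satisfy these conditions, then $[\vec{e},\vec{e}\,']\in\theta$ and $[\vec{e},\vec{e}\,']\in\theta^{\star}$ by symmetry and transitivity, whence $[\vec{e},\vec{e}\,']\in\theta\cap\theta^{\star}=\Delta^{A}$ and $\vec{e}=\vec{e}\,'$; this simultaneously shows that the element selected by $h$ is the central element just constructed, so $h(\theta)\in Z(A)$.

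The identity $h\circ g=\mathrm{id}_{Z(A)}$ is then immediate: for $\vec{e}\in Z(A)$, writing $\theta:=\theta_{\vec{0},\vec{e}}^{A}$ we have $[\vec{e},\vec{0}]\in\theta$ and $[\vec{e},\vec{1}]\in\theta^{\star}=\theta_{\vec{1},\vec{e}}^{A}$, so the unique element distinguished by $h$ is $\vec{e}$. For $g\circ h=\mathrm{id}_{FC(A)}$, I would take $\theta\in FC(A)$, put $\vec{e}=h(\theta)$ and $\vec{f}=\tau^{-1}([\vec{1},\vec{0}])$, and check by the same coordinate comparison that $(\theta,\theta^{\star})$ satisfies all four incidences of (DP) for the complementary pair $(\vec{e},\vec{f})$, namely $[\vec{e},\vec{0}]\in\theta$, $[\vec{e},\vec{1}]\in\theta^{\star}$, $[\vec{f},\vec{0}]\in\theta^{\star}$ and $[\vec{f},\vec{1}]\in\theta$. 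The uniqueness clause of (DP) then forces $(\theta,\theta^{\star})=(\theta_{\vec{0},\vec{e}}^{A},\theta_{\vec{1},\vec{e}}^{A})$, hence $\theta=g(\vec{e})=g(h(\theta))$.

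I expect the main obstacle to be the $g\circ h$ direction: one must correctly exhibit the complementary central element $\vec{f}$ attached to $\theta$ and verify the entire list of four incidences so that the uniqueness part of (DP) can be invoked, rather than just the two incidences that were enough for the uniqueness of $h$. The remaining steps reduce to the bookkeeping that $\tau$ preserves the distinguished constants $0_{i},1_{i}$ and to the intersection identity $\theta\cap\theta^{\star}=\Delta^{A}$.
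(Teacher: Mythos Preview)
Your proof is correct and complete. The paper, however, does not actually supply a proof of this theorem: it is stated as a ``fundamental result'' immediately after observing that under BFC factor complements are unique, and the argument is left entirely implicit (the notation $\theta_{\vec{0},\vec{e}}^{A}$, $\theta_{\vec{1},\vec{e}}^{A}$ is simply introduced in the preceding paragraph as ``the unique pair of complementary factor congruences'' with the two incidences). So there is no proof in the paper to compare against; your write-up is a careful unpacking of exactly what the author treats as routine.

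One small remark on your well-definedness argument for $g$: invoking (DP) gives uniqueness of the pair $(\theta,\delta)$ relative to the chosen complementary pair $(\vec{e},\vec{f})$, and you obtain $\vec{f}$ from a particular witnessing isomorphism $\tau$. Strictly speaking one should also note that the resulting $\theta$ does not depend on the choice of $\tau$ (equivalently of $\vec{f}$). This is immediate from the definability side of Theorem~\ref{key Theorem about BFC}: with $\rho$ as in (L), any pair $(\theta,\delta)$ of complementary factor congruences with $[\vec{e},\vec{0}]\in\theta$ and $[\vec{e},\vec{1}]\in\delta$ satisfies $\theta=\{(a,b):A\models\rho(\vec{e},a,b)\}$, since the canonical isomorphism $A\cong A/\theta\times A/\delta$ sends $\vec{e}$ to $[\vec{0},\vec{1}]$. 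This is presumably what the paper has in mind when it asserts uniqueness of $\theta_{\vec{0},\vec{e}}^{A}$ before stating the theorem, and it closes the only loose end in your argument.
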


As a consequence of Lemma \ref{basics about systems}, we obtain the following result for varieties with BFC: 

\begin{lem}\label{BFC has permutable congruences} 
In every algebra $A$ of a variety $\mathbf{V}$ with BFC, every pair of factor congruences permute. 
\end{lem}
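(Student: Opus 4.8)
The plan is to derive the result directly from the systems characterisation of permutability in Lemma~\ref{basics about systems}. Fix $A\in\V$ and two factor congruences $\theta,\delta\in FC(A)$. By the equivalence (1)$\Leftrightarrow$(3) of Lemma~\ref{basics about systems}, it suffices to show that every system $(\theta,\delta,x,y)$ over $Con(A)$ admits a solution; that is, whenever $(x,y)\in\theta\vee\delta$ I must produce $z\in A$ with $(z,x)\in\theta$ and $(z,y)\in\delta$.

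First I would exploit that $\theta$ is a factor congruence: letting $\theta^{\star}$ be its complement, the canonical map identifies $A$ with the product $B_1\times B_2$, where $B_1=A/\theta$ and $B_2=A/\theta^{\star}$, and under this identification $\theta$ and $\theta^{\star}$ become the kernels of the two projections, i.e. $\theta=\Delta^{B_1}\times\nabla^{B_2}$ and $\theta^{\star}=\nabla^{B_1}\times\Delta^{B_2}$. Since $\V$ has BFC, Lemma~\ref{FC Factors equivalent BFC} guarantees that the factor congruence $\delta$ of $B_1\times B_2$ factorizes, say $\delta=\delta_1\times\delta_2$ with $\delta_i\in Con(B_i)$.

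The key observation is then a containment that avoids any circular appeal to permutability: the relation $\delta_1\times\nabla^{B_2}$ is a congruence of $B_1\times B_2$ containing both $\theta=\Delta^{B_1}\times\nabla^{B_2}$ and $\delta=\delta_1\times\delta_2$, so $\theta\vee\delta\subseteq\delta_1\times\nabla^{B_2}$. Hence the hypothesis $(x,y)\in\theta\vee\delta$, written in coordinates as $x=(x_1,x_2)$ and $y=(y_1,y_2)$, forces $(x_1,y_1)\in\delta_1$. I would then exhibit the explicit solution $z=(x_1,y_2)$: it is $\theta$-related to $x$ because the two agree in the first coordinate, and it is $\delta$-related to $y$ because $(x_1,y_1)\in\delta_1$ and $(y_2,y_2)\in\delta_2$. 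This produces the required solution, and Lemma~\ref{basics about systems} concludes that $\theta$ and $\delta$ permute.

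The only delicate point is the factorization step: it is exactly here that the hypothesis BFC enters, through Lemma~\ref{FC Factors equivalent BFC}, and it is what makes $\delta$ interact coordinatewise with the decomposition induced by $\theta$. Once $\delta=\delta_1\times\delta_2$ is in hand, the containment $\theta\vee\delta\subseteq\delta_1\times\nabla^{B_2}$ and the construction of $z$ are routine coordinate computations, and I expect no further obstacle.
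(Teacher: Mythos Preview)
Your argument is correct. The paper does not actually spell out a proof of this lemma; it merely records it ``as a consequence of Lemma~\ref{basics about systems}'' and leaves the details implicit. Your write-up is a faithful and complete unpacking of that remark: you reduce permutability to the solvability of systems exactly as Lemma~\ref{basics about systems} suggests, and you supply the missing ingredient the paper glosses over, namely the use of Lemma~\ref{FC Factors equivalent BFC} to factor $\delta$ through the product decomposition induced by $\theta$. The explicit witness $z=(x_1,y_2)$ and the containment $\theta\vee\delta\subseteq\delta_1\times\nabla^{B_2}$ are the right moves, and nothing is circular. In short, your route is the natural elaboration of what the paper only hints at, and it makes transparent where the BFC hypothesis actually enters.
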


These facts, allows us to define some operations in $Z(A)$ as follows: Given $\vec{e}\in Z(A)$, the \emph{complement $\vec{e}^{c_{A}}$} of $\vec{e}$, is the only solution to the equations $[\vec{z}, \vec{1}]\in \theta_{\vec{0},\vec{e}}$ and $[\vec{z}, \vec{0}]\in \theta_{\vec{1},\vec{e}}$. Given $\vec{e}, \vec{f}\in Z(A)$, the \emph{infimum} $\vec{e}\wedge_{A}\vec{f}$ is the only solution to the equations $[\vec{z}, \vec{0}]\in \theta_{\vec{0},\vec{e}}\cap \theta_{\vec{0},\vec{f}}$ and $[\vec{z}, \vec{1}]\in \theta_{\vec{1},\vec{e}}\vee \theta_{\vec{1},\vec{f}}$. Finally, the \emph{supremum} $\vec{e}\vee_{A}\vec{f}$ is the only solution to the equations $[\vec{z}, \vec{0}]\in \theta_{\vec{0},\vec{e}}\vee \theta_{\vec{0},\vec{f}}$ and $[\vec{z}, \vec{1}]\in \theta_{\vec{1},\vec{e}}\cap \theta_{\vec{1},\vec{f}}$. 
\\

As result, we obtain that $\textbf{Z}(A)=(Z(A),\wedge_{A},\vee_{A}, ^{c_{A}},\vec{0},\vec{1})$ is a Boolean algebra which is isomorphic to $(FC(A), \vee, \cap, ^{\ast},\Delta^{A},\nabla^{A})$. Also notice that $\vec{e}\leq_{A} \vec{f}$ if and only if $\theta^{A}_{\vec{0},\vec{e}}\subseteq \theta^{A}_{\vec{0},\vec{f}}$, which in turn, is equivalent to $\theta^{A}_{\vec{1},\vec{f}}\subseteq \theta^{A}_{\vec{1},\vec{e}}$. If the context is clear, we will omit the subscripts in the operations of $\textbf{Z}(A)$.

\begin{lem}\label{Useful lema Centrals}
Let $\V$ be a variety with BFC and $A\in \V$. For every $\vec{e},\vec{f}\in Z(A)$, the following holds:
\begin{enumerate}
\item $\vec{a}=\vec{e}\wedge_{A}\vec{f}$ if and only if $[\vec{0},\vec{a}]\in \theta_{\vec{0},\vec{e}}$ and $[\vec{a},\vec{f}]\in \theta_{\vec{1},\vec{e}}$.
\item $\vec{a}=\vec{e}\vee_{A}\vec{f}$ if and only if $[\vec{1},\vec{a}]\in \theta_{\vec{1},\vec{e}}$ and $[\vec{a},\vec{f}]\in \theta_{\vec{0},\vec{e}}$.
\end{enumerate}
\end{lem}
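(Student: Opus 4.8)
The plan is to prove each biconditional by establishing two things: that the left-hand element actually satisfies the two displayed membership conditions on the right, and that those two conditions determine a \emph{unique} tuple, so that satisfying them characterizes exactly that element. Throughout I will use that congruences are symmetric, so $[\vec{0},\vec{a}]\in\theta_{\vec{0},\vec{e}}$ is the same as $[\vec{a},\vec{0}]\in\theta_{\vec{0},\vec{e}}$, and that, by Lemma \ref{BFC has permutable congruences}, $\theta_{\vec{0},\vec{e}}$ and $\theta_{\vec{1},\vec{e}}$ are complementary factor congruences, whence $\theta_{\vec{0},\vec{e}}\cap\theta_{\vec{1},\vec{e}}=\Delta^{A}$ and they permute with join $\nabla^{A}$. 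By Lemma \ref{basics about systems} applied coordinatewise, the system $(\theta_{\vec{0},\vec{e}},\theta_{\vec{1},\vec{e}},\vec{0},\vec{f})$ then has exactly one solution; this settles the uniqueness half of item (1) at once.

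For the existence half of item (1) put $\vec{a}=\vec{e}\wedge_{A}\vec{f}$. The condition $[\vec{0},\vec{a}]\in\theta_{\vec{0},\vec{e}}$ is immediate, since by definition $[\vec{a},\vec{0}]\in\theta_{\vec{0},\vec{e}}\cap\theta_{\vec{0},\vec{f}}\subseteq\theta_{\vec{0},\vec{e}}$. The substantive point, and the main obstacle, is the ``mixed'' condition $[\vec{a},\vec{f}]\in\theta_{\vec{1},\vec{e}}$, which couples the factor congruences of $\vec{e}$ with those of $\vec{f}$. I would obtain it as follows. On one hand, from $[\vec{a},\vec{0}]\in\theta_{\vec{0},\vec{f}}$ and $[\vec{f},\vec{0}]\in\theta_{\vec{0},\vec{f}}$ I get $[\vec{a},\vec{f}]\in\theta_{\vec{0},\vec{f}}$; on the other, from $[\vec{a},\vec{1}]\in\theta_{\vec{1},\vec{e}}\vee\theta_{\vec{1},\vec{f}}$ (the defining join for the meet) together with $[\vec{f},\vec{1}]\in\theta_{\vec{1},\vec{f}}$ I get $[\vec{a},\vec{f}]\in\theta_{\vec{1},\vec{e}}\vee\theta_{\vec{1},\vec{f}}$. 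Hence $[\vec{a},\vec{f}]$ lies in $\theta_{\vec{0},\vec{f}}\cap(\theta_{\vec{1},\vec{e}}\vee\theta_{\vec{1},\vec{f}})$. Since $\V$ has BFC, $FC(A)$ is a \emph{distributive} (indeed Boolean) sublattice of $Con(A)$, so this meet distributes as $(\theta_{\vec{0},\vec{f}}\cap\theta_{\vec{1},\vec{e}})\vee(\theta_{\vec{0},\vec{f}}\cap\theta_{\vec{1},\vec{f}})$, and the second term collapses to $\Delta^{A}$ because $\theta_{\vec{0},\vec{f}}$ and $\theta_{\vec{1},\vec{f}}$ are complementary. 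Therefore $[\vec{a},\vec{f}]\in\theta_{\vec{0},\vec{f}}\cap\theta_{\vec{1},\vec{e}}\subseteq\theta_{\vec{1},\vec{e}}$, as required.

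Combining the two paragraphs, $\vec{e}\wedge_{A}\vec{f}$ is a solution of the system $(\theta_{\vec{0},\vec{e}},\theta_{\vec{1},\vec{e}},\vec{0},\vec{f})$; since that system has a unique solution, any $\vec{a}$ satisfying $[\vec{0},\vec{a}]\in\theta_{\vec{0},\vec{e}}$ and $[\vec{a},\vec{f}]\in\theta_{\vec{1},\vec{e}}$ must equal $\vec{e}\wedge_{A}\vec{f}$, which yields both directions of item (1). Item (2) I would prove by the dual argument, interchanging $\vec{0}$ with $\vec{1}$, $\theta_{\vec{0},\cdot}$ with $\theta_{\vec{1},\cdot}$, and $\wedge_{A}$ with $\vee_{A}$: from $\vec{a}=\vec{e}\vee_{A}\vec{f}$ one reads off $[\vec{1},\vec{a}]\in\theta_{\vec{1},\vec{e}}$ directly from the defining meet $\theta_{\vec{1},\vec{e}}\cap\theta_{\vec{1},\vec{f}}$, while $[\vec{a},\vec{f}]\in\theta_{\vec{0},\vec{e}}$ follows from the same distributivity trick applied to $\theta_{\vec{1},\vec{f}}\cap(\theta_{\vec{0},\vec{e}}\vee\theta_{\vec{0},\vec{f}})$, using $\theta_{\vec{0},\vec{f}}\cap\theta_{\vec{1},\vec{f}}=\Delta^{A}$; uniqueness of the system $(\theta_{\vec{1},\vec{e}},\theta_{\vec{0},\vec{e}},\vec{1},\vec{f})$ then closes it. The only step beyond bookkeeping is the distributivity collapse, so I expect that to be the crux.
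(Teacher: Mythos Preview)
The paper states this lemma without proof (it closes the preliminaries section as a basic fact about the Boolean structure on $Z(A)$), so there is no argument in the paper to compare against. Your proof is correct: uniqueness is immediate from $\theta_{\vec{0},\vec{e}}\cap\theta_{\vec{1},\vec{e}}=\Delta^{A}$, and for the nontrivial ``mixed'' membership $[\vec{a},\vec{f}]\in\theta_{\vec{1},\vec{e}}$ you correctly exploit that, by the very definition of BFC in Theorem~\ref{key Theorem about BFC}, $FC(A)$ is a Boolean \emph{sublattice} of $Con(A)$, so the join $\theta_{\vec{1},\vec{e}}\vee\theta_{\vec{1},\vec{f}}$ in the definition of $\vec{e}\wedge_{A}\vec{f}$ coincides with the join in $Con(A)$ and the distributive collapse $\theta_{\vec{0},\vec{f}}\cap(\theta_{\vec{1},\vec{e}}\vee\theta_{\vec{1},\vec{f}})=(\theta_{\vec{0},\vec{f}}\cap\theta_{\vec{1},\vec{e}})\vee\Delta^{A}$ is legitimate. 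One small wording point: that $\theta_{\vec{0},\vec{e}}$ and $\theta_{\vec{1},\vec{e}}$ are complementary factor congruences is by their definition, not by Lemma~\ref{BFC has permutable congruences}; that lemma is only needed if you want existence of a solution via Lemma~\ref{basics about systems}, which in fact you do not need since you exhibit $\vec{e}\wedge_{A}\vec{f}$ as a solution directly.
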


\section{A characterization for RexDFC}\label{A characterization for RexDFC}

In this section we prove that a variety with BFC has RexDFC if and only if the factor congruence $\theta_{\vec{1},\vec{e}}$ associated to a central element $\vec{e}$, coincides with the principal congruence that identifies $\vec{1}$ with $\vec{e}$. 
\\

We begin by recalling the following (Gr\"atzer) version of Maltsev's key observation on principal congruences.

\begin{lem}\label{Gratzer Malsev Lemma} Let $A$ be an algebra and $a, b \in \textbf{A}$, $\vec{c}$, $\vec{d}$ $\in A^{n}$. Then $(a,b)\in \theta^{A}(\vec{c},\vec{d})$ if and only if there exist $(n+m)$-ary terms $t_{1}(\vec{x},\vec{u})$,...,$t_{k}(\vec{x},\vec{u})$ with $k$ odd and $\vec{\lambda}\in A^{m}$ such that: 

\begin{center}
\begin{tabular}{cc}
$a=t_{1}(\vec{c},\vec{\lambda})$ & $b=t_{k}(\vec{d},\vec{\lambda})$
\\
$t_{i}(\vec{c},\vec{\lambda})=t_{i+1}(\vec{c},\vec{\lambda})$, $i$ even, & $t_{i}(\vec{d},\vec{\lambda})=t_{i+1}(\vec{d},\vec{\lambda}),$ $i$ odd.
\end{tabular}
\end{center}
\end{lem}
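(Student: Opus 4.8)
The plan is to prove the two implications separately, treating the ``if'' direction (a Maltsev chain produces membership) as a direct computation and the ``only if'' direction by a minimality argument. For the latter I would fix $\theta := \theta^{A}(\vec{c},\vec{d})$ and define a binary relation $R$ on $A$ by declaring $(a,b)\in R$ exactly when a chain $t_{1},\dots,t_{k}$ (with $k$ odd) and a parameter tuple $\vec{\lambda}$ as in the statement exist. The goal is then to show that $R$ is a congruence of $A$ containing every generating pair $(c_{j},d_{j})$; since $\theta$ is by definition the least such congruence, this forces $\theta\subseteq R$, which is precisely the forward implication. The reverse implication shows $R\subseteq\theta$, so in fact $R=\theta$.

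For the ``if'' direction I would exploit that congruences are compatible with term operations, so $t_{i}(\vec{c},\vec{\lambda})\mathrel{\theta}t_{i}(\vec{d},\vec{\lambda})$ for each $i$ because each $(c_{j},d_{j})\in\theta$. Interleaving these congruence steps with the hypothesised equalities yields the zig-zag
\[
a=t_{1}(\vec{c},\vec{\lambda})\mathrel{\theta}t_{1}(\vec{d},\vec{\lambda})=t_{2}(\vec{d},\vec{\lambda})\mathrel{\theta}t_{2}(\vec{c},\vec{\lambda})=t_{3}(\vec{c},\vec{\lambda})\mathrel{\theta}\cdots\mathrel{\theta}t_{k}(\vec{d},\vec{\lambda})=b,
\]
where the parity conditions are exactly what is needed to make each equality legitimate and the oddness of $k$ is what lands the chain on $t_{k}(\vec{d},\vec{\lambda})=b$; transitivity of $\theta$ then gives $(a,b)\in\theta$. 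Within the ``only if'' direction the easy verifications are that $R$ contains the generators (take $k=1$ and $t_{1}(\vec{x},\vec{u})=x_{j}$), that $R$ is reflexive (take $k=1$ and a projection term onto a parameter slot set equal to $a$), and that $R$ is compatible with the operations: given related pairs in each argument I would change one coordinate at a time, absorbing the fixed coordinates into the parameter tuple and composing with the basic operation inside each $t_{i}$, which preserves all the defining equalities because operations respect equality.

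The main obstacle is the bookkeeping needed for symmetry and transitivity of $R$, where the asymmetric roles of $\vec{c}$ and $\vec{d}$ and the requirement that $k$ be odd interact. For symmetry I would reverse a chain for $(a,b)$ and repair the endpoints by prepending and appending constant terms (projections onto parameter slots holding $b$ and $a$), which restores the correct $\vec{c}$/$\vec{d}$ pattern at the ends while increasing the length by $2$, keeping it odd. For transitivity I would concatenate a chain for $(a,b)$ with one for $(b,b')$, inserting a single constant-$b$ term at the junction; this makes the total length odd again and, since the shift is by an even amount, realigns the even/odd conditions on both sides, while a common parameter tuple is obtained simply by concatenating the two parameter tuples and letting each term ignore the coordinates it does not use. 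Once $R$ is seen to be a reflexive, symmetric, transitive and compatible relation containing the generators, the minimality of $\theta^{A}(\vec{c},\vec{d})$ closes the argument.
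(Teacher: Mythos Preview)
Your proof is correct and follows the standard textbook argument for the Gr\"atzer--Maltsev description of principal congruences: the ``if'' direction is the obvious zig-zag through $\theta$, and the ``only if'' direction is obtained by showing that the set $R$ of pairs admitting such a chain is itself a congruence containing the generators, hence contains $\theta^{A}(\vec{c},\vec{d})$. The parity bookkeeping you outline for symmetry (reverse and pad by two constant terms) and transitivity (concatenate with one constant term inserted at the junction, shifting the second chain by the even amount $k+1$) is exactly right, and the one-coordinate-at-a-time reduction for compatibility is the usual trick.

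There is no comparison to make with the paper's proof, however, because the paper does not prove this lemma: it is explicitly introduced as a \emph{recalled} result (``the following (Gr\"atzer) version of Maltsev's key observation on principal congruences'') and stated without proof, as is customary for this classical fact. Your argument is essentially the proof one finds in standard references such as McKenzie--McNulty--Taylor, so there is nothing to correct or contrast.
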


A \emph{principal congruence formula} is a formula $\pi(x,y,\vec{u},\vec{v})$ of the form
\[\exists_{\vec{w}}(x\approx t_{1}(\vec{u},\vec{w})\wedge \bigwedge_{i\in E_{k}} (t_{i}(\vec{u},\vec{w})\approx t_{i+1}(\vec{u},\vec{w}))\wedge \bigwedge_{i\in O_{k}}(t_{i}(\vec{v},\vec{w})\approx t_{i+1}(\vec{v},\vec{w}))\wedge t_{k}(\vec{v},\vec{w})\approx y)), \]

where $k$ is odd and $t_{i}$ are terms of type $\tau$. This fact allows us to restate Lemma \ref{Gratzer Malsev Lemma} as follows:

\begin{lem}\label{Malsev restated}
Let $A$ be an algebra, $a, b \in \textbf{A}$, $\vec{c}$, $\vec{d}$ $\in A^{n}$. Then $(a,b)\in \theta^{A}(\vec{c},\vec{d})$ if and only if there exists a principal congruence formula $\pi$, such that $A\models \pi(a,b,\vec{c},\vec{d})$.
\end{lem}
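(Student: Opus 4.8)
The plan is to observe that the two statements are linked by a straightforward unwinding of first-order satisfaction, so the proof reduces to matching each equational clause of Lemma \ref{Gratzer Malsev Lemma} with the corresponding conjunct of a principal congruence formula. First I would fix the translation dictionary: the tuple $\vec{\lambda}\in A^{m}$ witnessing the Maltsev condition plays the role of the existentially quantified block $\vec{w}$; the parameters $\vec{c}$ and $\vec{d}$ are assigned to the free variables $\vec{u}$ and $\vec{v}$; and $a,b$ are assigned to $x,y$. Under this dictionary the satisfaction $A\models \pi(a,b,\vec{c},\vec{d})$ is meant to encode exactly the system of term-identities appearing in Lemma \ref{Gratzer Malsev Lemma}.

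For the forward direction, suppose $(a,b)\in \theta^{A}(\vec{c},\vec{d})$. Lemma \ref{Gratzer Malsev Lemma} supplies terms $t_{1},\dots,t_{k}$ (with $k$ odd) and a tuple $\vec{\lambda}$ satisfying the four displayed conditions. I would use precisely these terms to build the principal congruence formula $\pi(x,y,\vec{u},\vec{v})$, and then verify that substituting $a,b,\vec{c},\vec{d}$ for $x,y,\vec{u},\vec{v}$ and taking $\vec{w}=\vec{\lambda}$ satisfies every conjunct: the boundary equations $a=t_{1}(\vec{c},\vec{\lambda})$ and $t_{k}(\vec{d},\vec{\lambda})=b$ match the leftmost and rightmost conjuncts, while the two indexed conjunctions over $E_{k}$ (even $i$) and $O_{k}$ (odd $i$) reproduce the conditions $t_{i}(\vec{c},\vec{\lambda})=t_{i+1}(\vec{c},\vec{\lambda})$ for $i$ even and $t_{i}(\vec{d},\vec{\lambda})=t_{i+1}(\vec{d},\vec{\lambda})$ for $i$ odd. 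Hence $A\models \pi(a,b,\vec{c},\vec{d})$.

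For the converse, suppose $\pi$ is a principal congruence formula with $A\models \pi(a,b,\vec{c},\vec{d})$. By the definition of satisfaction of an existential formula, there is a witnessing tuple $\vec{\lambda}\in A^{m}$ for $\vec{w}$ making every conjunct true under the assignment $x\mapsto a$, $y\mapsto b$, $\vec{u}\mapsto\vec{c}$, $\vec{v}\mapsto\vec{d}$. Reading these true equations back recovers exactly the hypotheses of Lemma \ref{Gratzer Malsev Lemma}, which therefore yields $(a,b)\in \theta^{A}(\vec{c},\vec{d})$.

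The only point requiring care, rather than any genuine difficulty, is the bookkeeping of parities and arities: I would confirm that the index sets $E_{k}$ and $O_{k}$ of even and odd indices align with the ``$i$ even'' and ``$i$ odd'' side conditions of Lemma \ref{Gratzer Malsev Lemma}, and that the $(n+m)$-ary split of arguments into the $\vec{c}$/$\vec{d}$ slot and the $\vec{\lambda}$ slot is respected on both sides. Once this dictionary is pinned down, no computation remains.
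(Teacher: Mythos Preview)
Your proposal is correct and matches the paper's treatment: the paper gives no separate proof of this lemma, presenting it explicitly as a restatement of Lemma~\ref{Gratzer Malsev Lemma} once the notion of principal congruence formula has been introduced. Your argument is exactly the unwinding of definitions that justifies this restatement, and the bookkeeping you flag (parities via $E_{k}$, $O_{k}$ and the $(n+m)$-ary split) is indeed the only thing to check.
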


The following result (Claim 2 after Proposition 6.6 in \cite{SV2009}) will be useful for proving the main result of this section. 

\begin{lem}\label{Positive implies principal congruence}
Let $\mathbf{V}$ be a variety with BFC. If $\mathbf{V}$ has RpDFC, then for every $A\in \mathbf{V}$ and $\vec{e}\in Z(A)$, $\theta_{\vec{1}, \vec{e}}^{A}=\theta^{A}(\vec{1},\vec{e})$. 
\end{lem}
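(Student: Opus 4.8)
The plan is to prove the two inclusions separately, transferring membership in $\theta_{\vec{1},\vec{e}}^{A}$ along the canonical quotient map by exploiting the positive (R)-formula $\lambda(\vec{z},x,y)$ supplied by RpDFC. The inclusion $\theta^{A}(\vec{1},\vec{e})\subseteq \theta_{\vec{1},\vec{e}}^{A}$ is immediate: by definition $[\vec{e},\vec{1}]\in \theta_{\vec{1},\vec{e}}^{A}$, so each pair $(1_{i},e_{i})$ lies in $\theta_{\vec{1},\vec{e}}^{A}$, and since $\theta^{A}(\vec{1},\vec{e})$ is the least congruence containing these pairs, the inclusion follows.

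For the reverse inclusion, fix $(a,b)\in \theta_{\vec{1},\vec{e}}^{A}$ and an isomorphism $\tau: A\rightarrow A_{1}\times A_{2}$ witnessing that $\vec{e}$ is central, so that $\tau(\vec{e})=[\vec{0},\vec{1}]$ and $\theta_{\vec{1},\vec{e}}^{A}=Ker(\pi_{2}\tau)$. Writing $\tau(a)=(a_{1},a_{2})$ and $\tau(b)=(b_{1},b_{2})$, membership in $Ker(\pi_{2}\tau)$ means $a_{2}=b_{2}$, so property (R) applied to $A_{1}\times A_{2}$ gives $A_{1}\times A_{2}\models \lambda([\vec{0},\vec{1}],\tau(a),\tau(b))$. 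Since $\tau$ is an isomorphism and hence preserves all first order formulas, and $\tau(\vec{e})=[\vec{0},\vec{1}]$, this transports back to $A\models \lambda(\vec{e},a,b)$.

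Next I would bring in positivity. Let $q: A\rightarrow A/\theta^{A}(\vec{1},\vec{e})$ be the canonical surjection. Because $\lambda$ is positive (this is exactly where the RpDFC hypothesis is used), it is preserved by the surjective homomorphism $q$, whence $A/\theta^{A}(\vec{1},\vec{e})\models \lambda(q(\vec{e}),q(a),q(b))$. As $(1_{i},e_{i})\in \theta^{A}(\vec{1},\vec{e})$, we have $q(\vec{e})=q(\vec{1})=\vec{1}$, so $A/\theta^{A}(\vec{1},\vec{e})\models \lambda(\vec{1},q(a),q(b))$. The remaining task is to show that, in every $C\in \mathbf{V}$, the formula $\lambda(\vec{1},x,y)$ holds iff $x=y$; applying this with $C=A/\theta^{A}(\vec{1},\vec{e})$ then forces $q(a)=q(b)$, i.e. $(a,b)\in \theta^{A}(\vec{1},\vec{e})$, completing the proof. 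To establish the claim I would read (R) through a trivial direct factor: the one-element algebra $\mathbf{1}$ belongs to $\mathbf{V}$, and under the canonical isomorphism $\mathbf{1}\times C\cong C$ the central element $[\vec{0},\vec{1}]$ corresponds to $\vec{1}$. Instantiating (R) in $\mathbf{1}\times C$ with first factor $\mathbf{1}$ and second factor $C$ yields $\mathbf{1}\times C\models\lambda([\vec{0},\vec{1}],(\ast,x),(\ast,y))$ iff $x=y$, which is precisely $C\models\lambda(\vec{1},x,y)$ iff $x=y$.

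I expect the principal obstacle to be this last step, namely recognizing that the (R)-formula specialised at the constant central element $\vec{1}$ collapses to equality, since it requires interpreting property (R) through the degenerate decomposition $\mathbf{1}\times C$ rather than through a genuine two-sided one. By contrast, the transfer along $\tau$ and the preservation of $\lambda$ under the quotient $q$ are routine once positivity is available; note also that, via Lemma \ref{Existential implies Positive}, the same argument covers the RexDFC case.
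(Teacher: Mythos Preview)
Your argument is correct. The paper itself does not prove this lemma; it merely cites it as ``Claim 2 after Proposition 6.6'' in \cite{SV2009}, so there is no in-paper proof to compare against. For the record, the strategy you outline---showing $A\models\lambda(\vec{e},a,b)$ via the decomposition witnessing centrality, pushing this through the quotient $A\to A/\theta^{A}(\vec{1},\vec{e})$ by positivity of $\lambda$, and then collapsing $\lambda(\vec{1},x,y)$ to $x=y$ via the degenerate factorisation $\mathbf{1}\times C\cong C$---is exactly the standard route and matches the argument in the cited reference. Your identification of the ``principal obstacle'' is apt but perhaps overstated: the specialisation at $\vec{1}$ is a one-line observation once one remembers that the trivial algebra is a legitimate factor in property~(R); the genuinely load-bearing step is the preservation of $\lambda$ under the (surjective) quotient map, which is precisely where positivity enters and which would fail for an arbitrary first-order $\lambda$.
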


\begin{lem}\label{Definability by principal congruences}
Let $\mathbf{V}$ be a variety with BFC. Then, $\V$ has RexDFC if and only if for every $A\in \mathbf{V}$ and $\vec{e}\in Z(A)$, $\theta^{A}_{\vec{1},\vec{e}}=\theta^{A}(\vec{1},\vec{e})$. 
\end{lem}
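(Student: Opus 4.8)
The plan is to prove both implications, the forward one being an immediate consequence of the cited results and the converse requiring the extraction of a single existential formula from a per-algebra statement. For the forward implication, suppose $\V$ has RexDFC. Then Lemma \ref{Existential implies Positive}(1) gives that $\V$ has RpDFC, and Lemma \ref{Positive implies principal congruence} yields at once that $\theta_{\vec{1},\vec{e}}^{A}=\theta^{A}(\vec{1},\vec{e})$ for every $A\in\V$ and every $\vec{e}\in Z(A)$. So all the content lies in the converse.

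For the converse, the first step is to recast property (R) in terms of factor congruences. Given $A,B\in\V$, the element $[\vec{0},\vec{1}]$ is central in $A\times B$, and one checks directly from the definitions that its associated factor congruences are the kernels of the two projections, $\theta_{\vec{0},[\vec{0},\vec{1}]}^{A\times B}=Ker(\pi_{1})$ and $\theta_{\vec{1},[\vec{0},\vec{1}]}^{A\times B}=Ker(\pi_{2})$. Consequently, for $a,b\in A$ and $c,d\in B$ we have $((a,c),(b,d))\in\theta_{\vec{1},[\vec{0},\vec{1}]}^{A\times B}$ if and only if $c=d$. Under the hypothesis, $\theta_{\vec{1},[\vec{0},\vec{1}]}^{A\times B}=\theta^{A\times B}(\vec{1},[\vec{0},\vec{1}])$, so it suffices to produce a single principal congruence formula $\pi(x,y,\vec{u},\vec{v})$ such that, uniformly in $A$ and $B$, membership in $\theta^{A\times B}(\vec{1},[\vec{0},\vec{1}])$ is detected by $\pi(\cdot,\cdot,\vec{1},[\vec{0},\vec{1}])$; the desired formula will then be $\lambda(\vec{z},x,y):=\pi(x,y,\vec{1},\vec{z})$, which is existential since every principal congruence formula is.

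The crux, and the main obstacle, is precisely this uniformity: a priori, Lemma \ref{Malsev restated} only guarantees, for each individual pair in the congruence, \emph{some} principal congruence formula, whose Maltsev chain may depend on the algebra. To select one formula that works everywhere, I would pass to a generic instance. Take $A_{0}=\textbf{F}_{\V}(a,b)$ and $B_{0}=\textbf{F}_{\V}(c)$; the elements $(a,c)$ and $(b,c)$ of $A_{0}\times B_{0}$ share their second coordinate, hence lie in $\theta_{\vec{1},[\vec{0},\vec{1}]}^{A_{0}\times B_{0}}=\theta^{A_{0}\times B_{0}}(\vec{1},[\vec{0},\vec{1}])$, so by Lemma \ref{Malsev restated} there is a principal congruence formula $\pi$ with $A_{0}\times B_{0}\models\pi((a,c),(b,c),\vec{1},[\vec{0},\vec{1}])$. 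This $\pi$ is the required formula. For arbitrary $A,B\in\V$ and $a',b'\in A$, $c'\in B$, the universal property of $A_{0}$ and $B_{0}$ yields homomorphisms $A_{0}\to A$ and $B_{0}\to B$, hence $h\colon A_{0}\times B_{0}\to A\times B$ with $h(a,c)=(a',c')$ and $h(b,c)=(b',c')$, fixing the constants $\vec{1}$ and $[\vec{0},\vec{1}]$; since $\pi$ is primitive positive it is preserved by $h$, giving $A\times B\models\pi((a',c'),(b',c'),\vec{1},[\vec{0},\vec{1}])$ whenever the second coordinates agree. The reverse direction is automatic: if $\pi$ holds then Lemma \ref{Malsev restated} places the pair in $\theta^{A\times B}(\vec{1},[\vec{0},\vec{1}])=\theta_{\vec{1},[\vec{0},\vec{1}]}^{A\times B}$, forcing equality of the second coordinates. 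Thus $\lambda(\vec{z},x,y)=\pi(x,y,\vec{1},\vec{z})$ witnesses (R) existentially, so $\V$ has RexDFC.
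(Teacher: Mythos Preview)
Your proof is correct and follows essentially the same approach as the paper: both argue the forward direction via Lemmas \ref{Existential implies Positive} and \ref{Positive implies principal congruence}, and for the converse both extract a single principal congruence formula from the generic instance $\textbf{F}_{\V}(x,y)\times\textbf{F}_{\V}(y)$, then transport it to arbitrary $A\times B$ via the universal property of free algebras. Your write-up is in fact slightly cleaner in two places: you invoke preservation of primitive positive formulas under homomorphisms rather than unfolding the Maltsev chain coordinatewise, and for the reverse direction you appeal directly to Lemma \ref{Malsev restated} and the hypothesis to land in $Ker(\pi_{2})$, whereas the paper projects to the second factor and uses $\theta^{B}(\vec{1},\vec{1})=\Delta^{B}$.
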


\begin{proof}

The first implication follows directly from Lemmas \ref{Existential implies Positive} and \ref{Positive implies principal congruence}. On the other hand, in order to prove the second implication, let us write $P=\textbf{F}(x,y)\times \textbf{F}(y)$, where $\textbf{F}(x,y)$ and $\textbf{F}(y)$ are the free algebras generated by $\{x,y\}$ and $\{y\}$, respectively. By hypothesis, $Ker(\pi_{2})=\theta^{\bf{P}}_{[\vec{1},\vec{1}],[\vec{0},\vec{1}]}=\theta^{P}([\vec{1},\vec{1}],[\vec{0},\vec{1}])$. Since the pair $((x,y),(y,y))\in Ker(\pi_{2})$, from Lemma \ref{Gratzer Malsev Lemma}, there exist $(n+m)$-ary terms $t_{1}(\vec{x},\vec{u})$,...,$t_{k}(\vec{x},\vec{u})$ with $k$ odd and $\vec{u}\in P^{m}$ such that:

\begin{equation}
\begin{array}{cc}
(x,y)=t^{P}_{1}[[\vec{1},\vec{1}],\vec{u}]& (y,y)=t^{P}_{k}[[\vec{0},\vec{1}],\vec{u}]
\\
t^{P}_{i}[[\vec{1},\vec{1}],\vec{u}]=t^{P}_{i+1}[[\vec{1},\vec{1}],\vec{u}],\; i\in E_{k}, & t^{P}_{i}[[\vec{0},\vec{1}],\vec{u}]=t^{P}_{i+1}[[\vec{0},\vec{1}],\vec{u}],\; i\in O_{k}.\label{1}
\end{array}
\end{equation}

where $E_{k}$ and $O_{k}$ refer to the even and odd naturals less or equal to $k$, respectively.
\\

Since $\vec{u}\in P$, there are $\vec{P}(x,y)\in F(x,y)$ and $\vec{Q}(y)\in F(x,y)$, such that $\vec{u}=[\vec{P}, \vec{Q}]$. Recall that $t^{P}_{i}[[\vec{R},\vec{S}],[\vec{P}, \vec{Q}]]=(t^{F(x,y)}_{i}[\vec{R},\vec{P}],t^{F(y)}_{i}[\vec{S},\vec{Q}])$, for $1\leq i \leq k$ and $[\vec{R},\vec{S}]\in P$, thus, from equation (\ref{1}), we obtain that there exist $(n+m)$-ary terms $t_{1}(\vec{x},\vec{u})$,...,$t_{k}(\vec{x},\vec{u})$ with $k$ odd, $\vec{P}(x,y)\in F(x,y)$ and $\vec{Q}(y)\in F(y)$, such that:  

{\small
\begin{displaymath}
y=t^{F(y)}_{i}[\vec{0},\vec{Q}(y)], \textrm{for every}\; 1\leq i\leq k 
\end{displaymath}
}
and

{\small
\begin{displaymath}
\begin{array}{cc}
x=t^{F(x,y)}_{1}[\vec{1},\vec{P}(x,y)] & y=t^{F(x,y)}_{k}[\vec{0},\vec{P}(x,y)]
\\
t^{F(x,y)}_{i}[\vec{1},\vec{P}(x,y)]=t^{F(x,y)}_{i+1}[\vec{1},\vec{P}(x,y)],\; i\in E_{k} & t^{F(x,y)}_{i}[\vec{0},\vec{P}(x,y)]=t^{F(x,y)}_{i+1}[\vec{0},\vec{P}(x,y)],\; i\in O_{k}
\end{array}
\end{displaymath}
}  
  
Let $\varphi(x,y,\vec{z})=\pi(x,y,\vec{1},\vec{z})$. In order to to check that $\varphi$ defines $\theta^{A}_{\vec{1},\vec{e}}$ in terms of $\vec{e}$ let us assume $A,B\in \mathcal{V}$ and $(a,b),(c,d)\in A\times B$. Since the free algebra functor $\textbf{F}:\Set \rightarrow \mathcal{V}$ is left adjoint to the forgetful functor, in the case of $b=d$, the assignments $\alpha_{A}:\{x,y\}\rightarrow A$ and $\alpha_{B}:\{y\}\rightarrow B$, defined by $\alpha_{A}(x)=a$, $\alpha_{A}(y)=c$ and $\alpha_{B}(y)=b$ generate a unique pair of homomorphisms $\beta_{A}:\textbf{F}(x,y)\rightarrow A$ and $\beta_{B}:\textbf{F}(y)\rightarrow B$ extending $\alpha_{A}$ and $\alpha_{B}$, respectively. Therefore, since $P\models \varphi((x,y),(y,y), [\vec{0},\vec{1}])$, by applying $g=\beta_{A}\times \beta_{B}$ in (\ref{1}), we obtain as result that $A\times B\models \varphi((a,b),(c,b),[\vec{0},\vec{1}])$. On the other hand, if $A\times B\models \varphi((a,b),(c,d),[\vec{0},\vec{1}])$, then there exist $[\vec{\varepsilon},\vec{\delta}]\in A\times B$, such that 

{\small
\begin{equation}\label{2}
\begin{array}{cc}
(a,b)=t^{A\times B}_{1}[[\vec{1},\vec{1}],[\vec{\varepsilon},\vec{\delta}]] & (c,d)=t^{A\times B}_{k}[[\vec{0},\vec{1}],[\vec{\varepsilon},\vec{\delta}]]
\\
t^{A\times B}_{i}[[\vec{1},\vec{1}],[\vec{\varepsilon},\vec{\delta}]]=t^{A\times B}_{i+1}[[\vec{1},\vec{1}],[\vec{\varepsilon},\vec{\delta}]],\;i\in E_{k}, & t^{A\times B}_{i}[[\vec{0},\vec{1}],[\vec{\varepsilon},\vec{\delta}]]=t^{A\times B}_{i+1}[[\vec{0},\vec{1}],[\vec{\varepsilon},\vec{\delta}]],\;i\in O_{k}.
\end{array}
\end{equation}
}

So, since $t^{A\times B}_{i}[[\vec{j},\vec{r}],[\vec{\varepsilon},\vec{\delta}]]=(t^{A}_{i}[\vec{j},\vec{\varepsilon}],t^{B}_{i}[\vec{r},\vec{\delta}])$, for every $[\vec{j},\vec{r}]\in A\times B$ and $1\leq i\leq k$, from (\ref{2}), we conclude that

\begin{displaymath}
\begin{array}{cc}
b=t^{B}_{1}[\vec{1},\vec{\varepsilon}] & d=t^{B}_{k}[\vec{1},\vec{\varepsilon}]
\\
t^{B}_{i}[\vec{1},\vec{\varepsilon}]=t^{B}_{i+1}[\vec{1},\vec{\varepsilon}],\; i\in E_{k} & t^{B}_{i}[\vec{1},\vec{\varepsilon}]=t^{B}_{i+1}[\vec{1},\vec{\varepsilon}],\; i\in O_{k}.
\end{array}
\end{displaymath}

Which by Lemma \ref{Gratzer Malsev Lemma} means that $(b,d)\in \theta^{B}(\vec{1},\vec{1})$. Since $\theta^{B}(\vec{1},\vec{1})=\theta^{B}_{\vec{1},\vec{1}}$ by assumption and $\theta^{B}(\vec{1},\vec{1})=\Delta^{B}$, we get that $b=d$. This concludes the proof.

\end{proof}

\begin{coro}\label{Centrals are determined by principal congruences}
Let $\textbf{V}$ be a variety with RexDFC. Then, for every $A\in \textbf{V}$, the map $Z(A)\rightarrow FC(A)$, defined by $\vec{e}\mapsto \theta^{A}(\vec{1},\vec{e})$ is bijective.
\end{coro}

\section{Coextensivity, definability and stability by complements}\label{Coextensivity,  definability and stabitlity by complements}

\subsection{The universal property}\label{The universal property}

In this section we present some useful results that raise from the universal property of principal congruences in varieties with BFC.
\\

Let $A$ and $P$ algebras of the same type. We say that a homomorphism $f:A \rightarrow P$ has the \emph{universal property of identify the elements of $S$}, if for every homomorphism $g:A\rightarrow C$, such that $g(a)=g(b)$, for every $a,b\in S$; there exists a unique homomorphism $h:B\rightarrow C$, making the diagram

\begin{displaymath}
\xymatrix{
A \ar[r]^-{f} \ar[dr]_-{g} & B \ar@{-->}[d]^-{h}
\\
 & C
}
\end{displaymath}
\noindent
commutes.
\\

The following results are key observations for the rest of this paper. Since their proofs are routine, we leave to the reader the details of its proof.

\begin{lem}\label{universal property principal congruences} Let $A$ be an algebra and $S\subseteq A$. Then, the canonical homomorphism $\nu_{S}:A\rightarrow A/\theta(S)$ has the universal property of identify all the elements of $S$. 
\end{lem}

\begin{lem}\label{diagram pushout}
Let $A$ and $B$ be algebras of the same type and $f:A\rightarrow B$ be a homomorphism. Then, for every $S\subseteq A$, the diagram
\begin{displaymath}
\xymatrix{
A \ar[r]^-{\nu_{S}} \ar[d]_-{f} & A/\theta^{A}(S) \ar[d]
\\
B \ar[r]_-{\nu_{f(S)}} & B/\theta^{B}(f(S))
}
\end{displaymath}
\noindent
is a pushout. 
\end{lem}

Recall that, as a consequence of Lemma \ref{universal property principal congruences}, it follows that, for every $\vec{a},\vec{b}\in A^{n}$, the canonical homomorphism $A\rightarrow A/\theta(\vec{a},\vec{b})$ has the universal property of identify the pairs $(a_{k},b_{k})$, for $1\leq k\leq n$. Hence, as a straight consequence of Lemmas \ref{Definability by principal congruences}, \ref{universal property principal congruences} and \ref{diagram pushout} we get the following result. 

\begin{coro}\label{corollary universal property} 
Let $\mathbf{V}$ be a variety with RexDFC. If $A,B\in \mathbf{V}$ and $\vec{e}\in Z(A)$, then:
\begin{enumerate}
\item  The canonical homomorphism $A\rightarrow A/\theta^{A}_{\vec{1},\vec{e}}$ has the universal property of identify $\vec{e}$ with $\vec{1}$.
\item For every homomorphism $f:A\rightarrow B$, the diagram

\begin{displaymath}
\xymatrix{
A \ar[r]^-{\nu_{e}} \ar[d]_-{f} & A/\theta^{A}(\vec{1},\vec{e}) \ar[d]
\\
B \ar[r]_-{\nu_{f(e)}} & B/\theta^{B}(\vec{1},f(\vec{e}))
}
\end{displaymath}

is a pushout. 
\end{enumerate}
\end{coro}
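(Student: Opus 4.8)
Corollary (final statement): Let $\mathbf{V}$ be a variety with RexDFC, $A,B \in \mathbf{V}$, and $\vec{e} \in Z(A)$. Then (1) the canonical map $A \to A/\theta^A_{\vec{1},\vec{e}}$ has the universal property of identifying $\vec{e}$ with $\vec{1}$, and (2) for every homomorphism $f:A\to B$, the stated square is a pushout.

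Let me sketch how I'd prove this.

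The plan is to observe that, under the standing hypothesis of RexDFC, the corollary is not really about factor congruences at all but about \emph{principal} congruences, and that both parts then reduce to the already-established universal properties of principal quotients. The single fact that unlocks everything is Lemma~\ref{Definability by principal congruences}: since $\mathbf{V}$ has RexDFC, for every $A\in\mathbf{V}$ and every $\vec{e}\in Z(A)$ we have $\theta^{A}_{\vec{1},\vec{e}}=\theta^{A}(\vec{1},\vec{e})$. Thus the factor quotient $A/\theta^{A}_{\vec{1},\vec{e}}$ appearing in the statement is literally the principal quotient $A/\theta^{A}(\vec{1},\vec{e})$, and I may freely substitute one for the other throughout.

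For part~(1), I would simply rewrite $A\rightarrow A/\theta^{A}_{\vec{1},\vec{e}}$ as $\nu_{e}\colon A\rightarrow A/\theta^{A}(\vec{1},\vec{e})$ and invoke the remark following Lemma~\ref{universal property principal congruences}, which upgrades that lemma from a single subset to a prescribed family of pairs: the canonical map onto $A/\theta^{A}(\vec{a},\vec{b})$ has the universal property of identifying the pairs $(a_{k},b_{k})$. Taking $\vec{a}=\vec{1}$ and $\vec{b}=\vec{e}$, and reading ``identify $\vec{e}$ with $\vec{1}$'' as ``identify each pair $(e_{k},1_{k})$'', this is exactly the assertion; no further work is needed.

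For part~(2), I would first record that $f(\vec{1})=\vec{1}$, because the $1_{k}$ are $0$-ary terms and hence preserved by every homomorphism; consequently $\theta^{B}(\vec{1},f(\vec{e}))=\theta^{B}(f(\vec{1}),f(\vec{e}))$ and the bottom map of the square is the canonical $\nu_{f(e)}$ associated to the image pairs. The square is then an instance of Lemma~\ref{diagram pushout} applied to the principal congruence generated by the pairs $(\vec{1},\vec{e})$. If one prefers to avoid appealing to a ``pairs version'' of that lemma, the pushout property can instead be checked by hand: given a cocone $p\colon A/\theta^{A}(\vec{1},\vec{e})\rightarrow C$ and $q\colon B\rightarrow C$ with $p\nu_{e}=qf$, one uses $\nu_{e}(\vec{1})=\nu_{e}(\vec{e})$ to deduce $q(\vec{1})=q(f(\vec{e}))$, so that $q$ identifies the pairs $(1_{k},f(e_{k}))$; the universal property of $\nu_{f(e)}$ (again the remark after Lemma~\ref{universal property principal congruences}, now in $B$) produces the unique mediating map, and surjectivity of $\nu_{e}$ forces it to agree with $p$ on $A/\theta^{A}(\vec{1},\vec{e})$.

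The only place demanding any care is the passage from the form in which Lemmas~\ref{universal property principal congruences} and~\ref{diagram pushout} are stated --- for a subset $S$ with congruence $\theta^{A}(S)$ --- to the form needed here, where the congruence is generated by the $n$ specified pairs $(\vec{1},\vec{e})$. This is precisely the upgrade already carried out in the remark preceding the corollary, and it transfers verbatim to the pushout lemma; I do not expect any genuine obstacle beyond making this matching explicit. I would also stress that centrality of $f(\vec{e})$ in $B$ is never used: part~(2) lives entirely inside the theory of principal congruences, which is exactly what makes the reduction clean.
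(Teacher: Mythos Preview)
Your proposal is correct and follows essentially the same approach as the paper, which presents the corollary as a straight consequence of Lemmas~\ref{Definability by principal congruences}, \ref{universal property principal congruences}, and~\ref{diagram pushout} together with the observation (made just before the corollary) that the universal property of $A\to A/\theta^{A}(\vec{a},\vec{b})$ is to identify the pairs $(a_k,b_k)$. Your write-up is in fact more detailed than the paper's, spelling out the substitution $\theta^{A}_{\vec{1},\vec{e}}=\theta^{A}(\vec{1},\vec{e})$, the preservation $f(\vec{1})=\vec{1}$, and the passage from the ``subset'' to the ``pairs'' formulation, and even offering a direct verification of the pushout; none of this deviates from the intended argument.
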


\subsection{The equivalence}

In the context of varieties with BFC one may be tempted to think that in general, homomorphisms preserve central elements and even complementary central elements. Unfortunately, as we will see, this is not case. In this section we present a characterization for varieties with RexDFC such that their homomorphisms preserve complementary central elements in terms of a certain definability condition of the relation $\vec{e}\diamond_{A}\vec{f}$ and a purely categorical property, namely the coextensivity.
\\

Let $\mathbf{V}$ be a variety with BFC. If $A,B\in \mathbf{V}$ and $f:A\rightarrow B$ is a homomorphism, we say that $f$ \emph{preserves central elements} if the map $f:Z(A)\rightarrow Z(B)$ is well defined; that is to say, for every $\vec{e}\in Z(A),$ it follows that $f(\vec{e})\in Z(B)$. We say that $f$ \emph{preserves complementary central elements} if preserves central elements and for every $\vec{e}_{1}, \vec{e}_{2}\in Z(A)$, 
\[\vec{e}_{1}\diamond_{A}\vec{e}_{2} \Rightarrow f(\vec{e}_{1})\diamond_{B}f(\vec{e}_{2}). \]

\begin{rem}\label{SC and CSC are not trivial}
Observe that, since complements of central elements are unique, it follows that there is a bijection between $Z(A)$ and $\mathcal{K}_{A}=\{(\vec{e},\vec{f})\in A^{2n}\mid \vec{e}\diamond_{A}\vec{f}\}$. Then, if $f:A\rightarrow B$ is a homomorphism which preserves complementary central elements, it is clear that it also must preserve central elements. Classical examples of varieties with BFC in which every homomorphism preserves complementary central elements are the varieties ${\mathcal{R}}$ of commutative rings with unit and ${\mathcal{L}_{0,1}}$ of bounded distributive lattices. However, we stress that definitions above are not trivial, since there are varieties with BFC with homomorphisms that preserve central elements but does not preserve complementary central elements and even varieties with BFC with homomorphisms that does not preserve nor central elements nor complementary central elements. In order to illustrate the first situation, let $\mslattice$ be the variety of bounded meet semilattices. Since $\mslattice$ is a variety with $0$ and $1$, and the formula $\varphi(x,y,z)= (x\wedge z\approx y\wedge z)$ satisfies the condition (R) of the Introduction, it follows that $\mslattice$ is a variety with BFC. Let us consider the algebras $\textbf{A}=\textbf{2}\times \textbf{2}$ and $\textbf{B}=\textbf{2}\times \textbf{2}\times \textbf{2}$ (with $\textbf{2}$ the chain of two elements). Notice that $Z(A)=A$, $Z(B)=B$ and moreover, $(1,0,0)\diamond_{B}(0,1,1)$, $(0,1,0)\diamond_{B}(1,0,1)$ and $(0,0,1)\diamond_{B}(1,1,0)$. Let $\alpha:\textbf{A}\rightarrow \textbf{B}$ be the homomorphism defined by $\alpha(1,1)=(1,1,1)$, $\alpha(0,0)=(0,0,0)$, $\alpha(0,1)=(1,0,0)$ and $\alpha(1,0)=(0,0,1)$. It is clear that $\alpha$ preserves central elements but does not preserves complementary central elements. For the last situation, let $\mathcal{L}$ be the variety of bounded lattices. It is known (see \cite{V1999} and \cite{FH1970}) that $\mathcal{L}$ is a variety with BFC. If $\textbf{C}=\textbf{2}\times \textbf{2}$ and $\textbf{D}=\{0,1,a,b,c\}$, with $\{a,b,c\}$ not comparables, it easily follows that $\textbf{C}$ is subalgebra of $\textbf{D}$, but $\textbf{C}$ is directly decomposable while $\textbf{D}$ is not. So the inclusion does not preserve nor central elements nor complementary central elements.
\end{rem}

\begin{defi}\label{Stability and C-Stability}
Let $\V$ be a variety with BFC. We say that $\V$ is stable by complements if every homomorphism of $\V$ preserves complementary central elements. 
\end{defi}

Let $\textbf{V}$ be a variety with BFC. We say that a formula $\Sigma(\vec{z},\vec{u})$ \emph{defines the property} $\vec{e}\diamond_{A} \vec{f}$ \emph{in} $\mathbf{V}$ if for every $A\in \mathbf{V}$ and $\vec{e},\vec{f}\in A^{n}$ it follows that $\vec{e}\diamond_{A} \vec{f}$ if and only if $A\models \sigma[\vec{e},\vec{f}]$.

\begin{lem}\label{Characterization in terms of definability}
Let $\textbf{V}$ be a variety with BFC. The following are equivalent:
\begin{itemize}
\item[1.] The relation $\vec{e}\diamond_{A} \vec{f}$ is definable by a formula $\exists \bigwedge p=q$.
\item[2.] $\textbf{V}$ is stable by complements.
\end{itemize}
\end{lem}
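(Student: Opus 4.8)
The plan is to prove the two implications separately, with the forward implication (1)$\Rightarrow$(2) being routine and the converse carrying all the weight.

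For (1)$\Rightarrow$(2), suppose $\Sigma(\vec z,\vec u)=\exists\vec w\,\bigwedge_{j} p_{j}(\vec z,\vec u,\vec w)\approx q_{j}(\vec z,\vec u,\vec w)$ defines $\vec e\diamond_{A}\vec f$. Given a homomorphism $f\colon A\to B$ and $\vec e_{1}\diamond_{A}\vec e_{2}$, I would note that $A\models\Sigma[\vec e_{1},\vec e_{2}]$, fix witnesses $\vec a$ for $\vec w$, and apply $f$: since homomorphisms preserve equations, $\bigwedge_{j}p_{j}(f(\vec e_{1}),f(\vec e_{2}),f(\vec a))\approx q_{j}(f(\vec e_{1}),f(\vec e_{2}),f(\vec a))$ holds in $B$, so $B\models\Sigma[f(\vec e_{1}),f(\vec e_{2})]$ and hence $f(\vec e_{1})\diamond_{B}f(\vec e_{2})$. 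By Remark \ref{SC and CSC are not trivial} this already forces $f$ to preserve central elements, so $\V$ is stable by complements. The only ingredient here is that $\exists\bigwedge p\approx q$ formulas are preserved under homomorphisms.

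For (2)$\Rightarrow$(1), the idea is to realize $\diamond$ as the relation \emph{represented} by a generic complementary central pair and then read off a conjunction of equations from a presentation of the representing algebra. I would take $G=\textbf{F}_{\V}(\emptyset)\times\textbf{F}_{\V}(\emptyset)$ together with $\vec e_{0}=[\vec 0,\vec 1]$ and $\vec f_{0}=[\vec 1,\vec 0]$, which are complementary central in $G$ via the displayed decomposition. Two observations pin down $\diamond$. First, for every $A\in\V$ with $\vec e\diamond_{A}\vec f$, fixing an isomorphism $A\cong A_{1}\times A_{2}$ sending $(\vec e,\vec f)$ to $([\vec 0,\vec 1],[\vec 1,\vec 0])$, the product of the unique homomorphisms $\textbf{F}_{\V}(\emptyset)\to A_{i}$ yields a homomorphism $h\colon G\to A$ with $h(\vec e_{0})=\vec e$ and $h(\vec f_{0})=\vec f$ (homomorphisms preserve the constants $\vec 0,\vec 1$). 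Second, \emph{conversely}, for \emph{any} homomorphism $h\colon G\to A$, stability by complements applied to $h$ turns the complementary central pair $(\vec e_{0},\vec f_{0})$ into a complementary central pair $(h(\vec e_{0}),h(\vec f_{0}))$ of $A$. Together these give the key equivalence: $\vec e\diamond_{A}\vec f$ if and only if there is a homomorphism $h\colon G\to A$ with $h(\vec e_{0})=\vec e$ and $h(\vec f_{0})=\vec f$. Here stability by complements is exactly what powers the backward direction.

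It then remains to convert this Hom-condition into a first-order $\exists\bigwedge p\approx q$ formula. Choosing a generating set $\{\vec e_{0},\vec f_{0}\}\cup\{g_{i}\}_{i\in I}$ of $G$ and a presentation $G\cong\textbf{F}_{\V}(\vec z,\vec u,\vec w)/\Theta$ with $\vec z\mapsto\vec e_{0}$, $\vec u\mapsto\vec f_{0}$ and $\vec w\mapsto(g_{i})_{i\in I}$, where $\Theta$ is generated by equations $\{p_{j}\approx q_{j}\}_{j\in J}$, a homomorphism $h$ as above exists precisely when the images of $\vec w$ can be chosen in $A$ so as to satisfy these equations; that is, $\diamond$ is defined by $\Psi(\vec z,\vec u)=\exists\vec w\,\bigwedge_{j\in J}p_{j}\approx q_{j}$. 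The genuine obstacle is that $J$ need not be finite, so $\Psi$ is a priori infinitary. I would remove this by compactness: since $\V$ has BFC, the relation $\diamond$ is already first-order definable by some $\Phi(\vec z,\vec u)$ (built from the formulas witnessing (L) and (R) together with the determining property), and the theory $\V\cup\{p_{j}(\vec c,\vec d,\vec a)\approx q_{j}(\vec c,\vec d,\vec a):j\in J\}\cup\{\neg\Phi(\vec c,\vec d)\}$ in new constants $\vec c,\vec d,\vec a$ is inconsistent, because any model would furnish a homomorphism $G\to A$ and hence, by the equivalence above, a complementary central pair, contradicting $\neg\Phi$. Compactness then yields a finite $J_{0}\subseteq J$ with $\V\models\forall\,(\exists\vec w\,\bigwedge_{j\in J_{0}}p_{j}\approx q_{j}\rightarrow\Phi)$, and since the forward implication $\diamond\Rightarrow\exists\vec w\,\bigwedge_{j\in J_{0}}p_{j}\approx q_{j}$ is immediate, the finite formula $\exists\vec w\,\bigwedge_{j\in J_{0}}p_{j}\approx q_{j}$ defines $\diamond$. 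The main difficulty of the whole proof is thus concentrated in this finiteness step and in justifying first-order definability of $\diamond$ under BFC; the Maltsev and principal-congruence machinery of Lemmas \ref{Gratzer Malsev Lemma} and \ref{Malsev restated} offers an alternative, more hands-on route to exhibiting the terms $p_{j},q_{j}$, in the style of Lemma \ref{Definability by principal congruences}.
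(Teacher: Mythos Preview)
Your proof is correct and takes a genuinely different route from the paper. The paper's argument is a black-box application of a general preservation theorem: it expands the language by a $2n$-ary relation symbol $R$ interpreted as $\diamond$, notes (via \cite{SV2009}) that the resulting class $\mathcal{K}$ is first-order axiomatizable and closed under finite products and ultraproducts, and then invokes Theorem~6.2 of \cite{CV2016}, which says that under these hypotheses $R$ is $\exists\bigwedge\mathrm{At}$-definable iff every $\mathcal{L}$-homomorphism is an $\mathcal{L}_{1}$-homomorphism. Your argument, by contrast, is a self-contained proof of this special case: you exhibit $G=\mathbf{F}_{\V}(\emptyset)\times\mathbf{F}_{\V}(\emptyset)$ with its canonical complementary pair as a \emph{representing object} for $\diamond$ (this is precisely where stability by complements is used), read off an infinitary $\exists\bigwedge p\approx q$ description from a presentation of $G$, and then cut down to a finite subformula by compactness, using the single first-order formula $\Phi$ that defines $\diamond$ under BFC. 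Both approaches ultimately rest on the same two facts---first-order definability of $\diamond$ in a finite language (your $\Phi$, the paper's finite $\Gamma$) and a compactness/ultraproduct step---but the paper outsources the work to \cite{CV2016} while you carry it out explicitly. Your version is more illuminating about \emph{why} the equivalence holds (the generic pair in $G$ makes the mechanism visible), at the cost of length; the paper's version is shorter and situates the lemma as an instance of a general phenomenon. One small point worth making explicit in your write-up: the index set $I$ of extra generators may also be infinite, not just $J$, but your compactness step handles this automatically since the finite $J_{0}$ mentions only finitely many of the $w$-variables.
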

\begin{proof}
Let $\mathcal{L}$ be the language of $\textbf{V}$ and consider $\mathcal{L}_{1}=\mathcal{L}\cup \{R\}$, where $R$ is $2n$-ary relation symbol. Observe that a $\mathcal{L}_{1}$-structure is just a pair $(A,R^{A})$, where $A$ is a $\mathcal{L}$-algebra and $R^{A}\subseteq A^{2n}$. For an algebra $A\in \mathcal{V}$, we define $R^{A}=\{(\vec{e},\vec{f})\in A^{2n}\mid \vec{e}\diamond_{A}\vec{f}\}$. Let us regard the following class $\mathcal{K}=\{(A,R^{A})\mid A\in \V\}$ of $\mathcal{L}_{1}$-structures. From Lemma 4.1 of \cite{SV2009}, there exists a set $\Gamma$ of first order formulas in the language of $\V$ such that, for every $A\in \textbf{V}$, $\vec{e}\diamond_{A}\vec{f}$ if and only if $A\models \sigma[\vec{e},\vec{f}]$, for every  $\sigma(\vec{x},\vec{y})\in \Gamma$. Moreover, the aforementioned Lemma, also ensures that $\mathcal{K}$ is stable by finite products. It is easy to see that the class $\mathcal{K}$ is a axiomatizable by the set of sentences \[\Sigma \cup \{(\forall_{\vec{x},\vec{y}})(R(\vec{x},\vec{y})\leftrightarrow \bigwedge_{\sigma \in \Gamma}\sigma(\vec{x},\vec{y}))\}, \] 

where $\Sigma$ is any set of axioms for the class $\textbf{V}$. Recall that, since $\mathcal{L}$ is finite, then $\mathcal{K}$ is a first order class, so it is closed by ultraproducts. Hence, item (4) of Theorem 6.2 of \cite{CV2016} tell us that the following are equivalent: 
\begin{itemize}
\item[(a)] There is a formula  $\exists \bigwedge At(\mathcal{L})$ which defines $R$ in $\mathcal{K}$.
\item[(b)] If $(A,R^{A})$,$(B,R^{B})\in \mathcal{K}$ and $\sigma:A\rightarrow B$ is a homomorphism, then $\sigma:(A,R^{A})\rightarrow (B,R^{B})$ is a homomorphism. (Notice that $ (A,R^{A})_{\mathcal{L}}=A$ for every $\mathcal{L}_{1}$-structure $(A,R^{A})$).
\end{itemize}
But $1.$ is a restatement of $(a)$ and $2.$ is a restatement of $(b)$. This concludes the proof.
\end{proof}

\begin{defi}\label{Coextensivity} A category with finite limits $\mathcal{C}$ is called extensive if has finite coproducts and the canonical functors $1 \rightarrow \mathcal{C}/0$ and $\mathcal{C}/X \times \mathcal{C}/Y \rightarrow \mathcal{C}/(X + Y )$ are equivalences. 
\end{defi}

If the opposite $\mathcal{C}^{op}$ of a category $\mathcal{C}$ is extensive, we will say that $\mathcal{C}$ is coextensive. Classical examples of coextensive categories are the categories ${\Ring}$ and $\dLat$ of commutative rings with unit and bounded distributive lattices, respectively. In the following, we will use a characterization proved in \cite{CW1993}.

\begin{prop}\label{Coextensivity Proposition} 
A category $\mathcal{C}$ with finite coproducts and pullbacks along its injections is extensive if and only if the following conditions hold:
\begin{enumerate}
\item (Coproducts are disjoint.) For every $X$ and $Y$, the injections $X \xrightarrow[]{in_{0}} X+Y$ and $Y\xrightarrow[]{in_{1}} X+Y$ are monic and the square below is a pullback
\begin{displaymath}
\xymatrix{
0 \ar[r]^-{!} \ar[d]^-{!} & Y \ar[d]^-{in_{1}} 
\\
X \ar[r]_-{in_{0}}  & X+Y
}
\end{displaymath}

\item (Coproducts are universal.) For every $X,X_{i}, Y_{i}$ with $i=0,1$ and $X \xrightarrow[]{f} Y_{0}+Y_{1}$, if the squares below are pullbacks
\begin{displaymath}
\xymatrix{
X_{0} \ar[r]^-{x_{0}} \ar[d]_-{h_{0}} & X \ar[d]^-{f} & X_{1} \ar[l]_-{x_{1}} \ar[d]^-{h_{1}}
\\
Y_{0} \ar[r]_-{y_{0}} & Y_{0}+Y_{1} & Y_{1} \ar[l]^-{y_{1}}
}
\end{displaymath}
\noindent
then, the cospan $X_{0}\rightarrow X \leftarrow X_{1}$ is a coproduct.
\end{enumerate}
\end{prop}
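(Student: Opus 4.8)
The plan is to realise the comparison functor of Definition~\ref{Coextensivity} as the left adjoint in an adjunction and then to read off the two conditions from its unit and counit. Fix $X,Y\in\mathcal{C}$ and write
\[
S:\mathcal{C}/X\times\mathcal{C}/Y\longrightarrow\mathcal{C}/(X+Y),\qquad (A\xrightarrow{a}X,\,B\xrightarrow{b}Y)\longmapsto(A+B\xrightarrow{a+b}X+Y)
\]
for the canonical functor, and use the hypothesis that pullbacks along the injections exist to define
\[
T:\mathcal{C}/(X+Y)\longrightarrow\mathcal{C}/X\times\mathcal{C}/Y,\qquad (Z\xrightarrow{f}X+Y)\longmapsto\bigl(X\times_{X+Y}Z\to X,\,Y\times_{X+Y}Z\to Y\bigr).
\]
The first step is to check that $S\dashv T$: a morphism $A+B\to Z$ over $X+Y$ is the same as a pair of morphisms $A\to Z$, $B\to Z$ over $X+Y$, and by the universal property of pullbacks these correspond to a morphism $A\to X\times_{X+Y}Z$ over $X$ together with a morphism $B\to Y\times_{X+Y}Z$ over $Y$. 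Granting this, $S$ is an equivalence if and only if the unit $\eta$ and counit $\varepsilon$ of the adjunction are both isomorphisms, which reduces the proposition to matching ``$\varepsilon$ iso'' with universality and ``$\eta$ iso'' with disjointness.

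For the counit, $\varepsilon_{(Z,f)}$ is exactly the canonical morphism $X_{0}+X_{1}\to Z$ induced by the two pullback projections, where $X_{0}\to X$ and $X_{1}\to Y$ are the pullbacks of $f$ along $in_{0}$ and $in_{1}$. Hence $\varepsilon$ is iso for every $f$ precisely when every such cospan $X_{0}\to Z\leftarrow X_{1}$ is a coproduct, i.e. precisely the universality condition~(2). This identification is immediate once the adjunction is in place.

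For the unit, $\eta_{(A,B)}$ compares $(A,B)$ with the pullbacks of $a+b$ along the injections, so ``$\eta$ iso'' says exactly that the injection squares (the square with $in_{0}$, $a$ and $a+b$, and its mirror image) are pullbacks. Assuming (1) and (2), I would compute $X\times_{X+Y}(A+B)$ by using universality to distribute the pullback over the coproduct as $(X\times_{X+Y}A)+(X\times_{X+Y}B)$; monicity of $in_{0}$ collapses the first summand to $A$, while the disjointness square of~(1) collapses the second to $0$, giving $\eta$ iso. Conversely, if $\eta$ is always iso, then specialising one factor to the initial object recovers both the monicity of the injections and the distinguished pullback square, that is, condition~(1). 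Finally I would treat the clause $1\simeq\mathcal{C}/0$ of Definition~\ref{Coextensivity}, which asserts that $0$ is strict initial, by running the same unit/counit analysis with $X=Y=0$ and checking that strictness is equivalent to, and in fact subsumed by, disjointness.

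I expect the main obstacle to be the unit step. Unlike the counit, which translates transparently into~(2), disentangling ``$\eta$ iso'' from disjointness genuinely requires both conditions at once --- universality to split the pullback over the coproduct, disjointness to annihilate the cross term --- and some care is needed to confirm that the comparison maps produced are the canonical ones and that every identification is natural over $X+Y$. A secondary point deserving attention is the bookkeeping around the initial object: one must verify that the separately stated equivalence $1\simeq\mathcal{C}/0$ is not an extra hypothesis beyond~(1) but coincides with the strictness of $0$ forced by disjointness.
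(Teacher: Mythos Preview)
The paper does not supply a proof of this proposition at all: immediately before the statement it says ``In the following, we will use a characterization proved in \cite{CW1993}'', and the result is simply quoted from Carboni--Lack--Walters. There is therefore no in-paper argument to compare your proposal against.

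That said, your outline is essentially the argument given in \cite{CW1993} itself. Exhibiting the comparison functor $S$ as a left adjoint to the pullback functor $T$, identifying the counit with the comparison map $X_{0}+X_{1}\to X$ (whence condition~(2)), and unwinding the unit via pullback pasting together with disjointness is exactly how the cited reference proceeds. Your remark that the unit step needs \emph{both} hypotheses simultaneously is accurate: one applies universality to the map $A+B\to X+Y$ to split the pullback along $in_{0}$ as a coproduct of two iterated pullbacks, then uses monicity of $in_{0}$ to identify one summand with $A$ and the disjointness square to identify the other with a pullback of $0$. The step you flag as delicate---that $0$ is strict---is indeed not part of condition~(1) alone but follows from (1) and (2) together: if $Z\to 0$, run your construction with $W+0\cong W$ to obtain that the codiagonal $Z+Z\to Z$ is an isomorphism, and then disjointness of $Z+Z$ forces $Z\cong 0$. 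With that in hand the ``cross term'' vanishes and $\eta$ is invertible; the clause $1\simeq\mathcal{C}/0$ is then automatic. So your plan is sound and matches the source the paper cites, but be aware that within the present paper the proposition is treated as a black box.
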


Let $\mathbf{V}$ be a variety with BFC. We write $\mathcal{V}$ to denote the algebraic category associated to $\mathbf{V}$.

\begin{lem}\label{preservation imply prods stable by pushouts}
Let $\mathbf{V}$ be a variety with RexDFC. If $\mathbf{V}$ is stable by complements then, in $\mathcal{V}$ the products are stable by pushouts.
\end{lem}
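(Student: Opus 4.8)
The plan is to unwind the categorical statement into the language of factor congruences and then feed it the two structural inputs at our disposal: the pushout description of quotients by principal congruences (Corollary \ref{corollary universal property}) and the preservation of complementary central elements (Definition \ref{Stability and C-Stability}). Concretely, stability of products under pushouts is the dual, inside $\mathcal{V}$, of condition (2) of Proposition \ref{Coextensivity Proposition}: given a product $A_{1}\times A_{2}$ with projections $\pi_{1},\pi_{2}$ and a homomorphism $f\colon A_{1}\times A_{2}\to B$, I must show that if $B\to B_{i}$ is the pushout of $\pi_{i}$ along $f$, then the span $B_{1}\leftarrow B\to B_{2}$ is a product. Thus the whole argument takes place over a single map $f$ out of a binary product.

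First I would describe the projections algebraically. In $A=A_{1}\times A_{2}$ the tuples $\vec{e}=[\vec{0},\vec{1}]$ and $\vec{f}=[\vec{1},\vec{0}]$ are complementary central elements, and a direct check gives $\theta^{A}_{\vec{1},\vec{e}}=Ker(\pi_{2})$ and $\theta^{A}_{\vec{1},\vec{f}}=Ker(\pi_{1})$; hence, up to the canonical isomorphisms $A/Ker(\pi_{i})\cong A_{i}$, each projection $\pi_{i}$ is a canonical quotient by a factor congruence. Using RexDFC through Lemma \ref{Definability by principal congruences}, these factor congruences are the principal congruences $\theta^{A}(\vec{1},\vec{e})$ and $\theta^{A}(\vec{1},\vec{f})$, which places us exactly in the situation of Corollary \ref{corollary universal property}(2). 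That corollary identifies the pushout of $\pi_{2}\ (=\nu_{\vec{e}})$ along $f$ with $\nu_{f(\vec{e})}\colon B\to B/\theta^{B}(\vec{1},f(\vec{e}))$, and likewise the pushout of $\pi_{1}$ along $f$ with $\nu_{f(\vec{f})}\colon B\to B/\theta^{B}(\vec{1},f(\vec{f}))$. So $B_{2}=B/\theta^{B}(\vec{1},f(\vec{e}))$ and $B_{1}=B/\theta^{B}(\vec{1},f(\vec{f}))$, and the two pushout legs $q_{1},q_{2}$ are precisely these canonical quotients.

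The decisive step is to upgrade these two quotients to a product decomposition, and this is where stability by complements enters. Since $\vec{e}\diamond_{A}\vec{f}$ and $f$ preserves complementary central elements, we get $f(\vec{e})\diamond_{B}f(\vec{f})$, so $f(\vec{f})=(f(\vec{e}))^{c_{B}}$ is the Boolean complement of $f(\vec{e})$ in $\mathbf{Z}(B)$. Using Theorem \ref{Bijection betwen centrals and factor congruences} together with the defining equations of the complement, one obtains $\theta^{B}_{\vec{1},f(\vec{f})}=\theta^{B}_{\vec{0},f(\vec{e})}=(\theta^{B}_{\vec{1},f(\vec{e})})^{\ast}$, so the two factor congruences are complementary. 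Rewriting them through RexDFC as $\theta^{B}(\vec{1},f(\vec{e}))$ and $\theta^{B}(\vec{1},f(\vec{f}))$, I conclude that $\theta^{B}(\vec{1},f(\vec{f}))\diamond\theta^{B}(\vec{1},f(\vec{e}))$, whence the canonical map $(q_{1},q_{2})\colon B\to B_{1}\times B_{2}$ is an isomorphism. This is exactly the assertion that $B_{1}\leftarrow B\to B_{2}$ is a product.

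I expect the only delicate points to be bookkeeping: keeping straight which central element is attached to which projection (and hence to which pushout), and the small computation with Theorem \ref{Bijection betwen centrals and factor congruences} showing that complementarity of central elements in $B$ translates into complementarity of the corresponding principal factor congruences. The conceptual heart, namely that a preserved complementary central pair in $B$ is the same data as a product decomposition of $B$, is immediate once the pushouts have been identified, so the main obstacle is really this translation rather than any genuinely new construction.
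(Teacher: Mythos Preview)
Your proposal is correct and follows essentially the same route as the paper: identify the projections as quotients by the factor congruences attached to the central pair $[\vec{0},\vec{1}],[\vec{1},\vec{0}]$, use RexDFC (Lemma~\ref{Definability by principal congruences}) to rewrite these as principal congruences, apply Corollary~\ref{corollary universal property}(2) to compute the pushouts, and then invoke stability by complements to obtain the product decomposition of $B$. The paper is slightly terser in the last step (it does not spell out the passage through Theorem~\ref{Bijection betwen centrals and factor congruences}), but the argument is the same.
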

\begin{proof}

Let $A,B\in \textbf{B}$ and $f:A\rightarrow B$ be a homomorphism. If $A\cong A_{1}\times A_{2}$, let us consider de diagram: 

\begin{displaymath}
\xymatrix{
A_{1} \ar[d] & \ar[l] \ar[r] \ar[d] A & A_{2} \ar[d]
\\
P_{1}  & \ar[l] \ar[r]  B & P_{2} 
}
\end{displaymath} 
\noindent
Where $P_{1}$ and $P_{2}$ are the pushouts from the left and the right squares, respectively. If $i$ denotes the isomorphism between $A$ and $A_{1}\times A_{2}$, then $A_{j}\cong A/Ker(\pi_{j}i)$ (with $j=1,2$). If $\vec{e}_{j}$ denotes the central element corresponding to $Ker(\pi_{j}i)$, since $Ker(\pi_{1}i) \diamond Ker(\pi_{2}i)$ in $Con(A)$, then from Lemma \ref{Definability by principal congruences} we have that $Ker(\pi_{1}i)=\theta^{A}(\vec{1},\vec{e}_{2})$ and $Ker(\pi_{2}i)=\theta^{A}(\vec{1},\vec{e}_{1})$. From, item 2${.}$ of Corollary \ref{corollary universal property}, we get that $P_{1}\cong B/\theta^{B}(\vec{1},f(\vec{e}_{2}))$ and $P_{2}\cong B/\theta^{B}(\vec{1},f(\vec{e}_{1}))$. The universal property of pushouts implies that $B\rightarrow P_{1}$ coincides with $B\rightarrow \theta^{B}(\vec{1},f(\vec{e}_{2}))$ and $B\rightarrow P_{2}$ with $B\rightarrow \theta^{B}(\vec{1},f(\vec{e}_{1}))$. Since $f$ preseserves pairs of complementary central elements by assumption, we can conclude that $B\cong B/\theta^{B}(\vec{1},f(\vec{e}_{2})) \times B/\theta^{B}(\vec{1},f(\vec{e}_{1}))\cong P_{1}\times P_{2}$.
\end{proof}

\begin{lem}\label{reciprocal preservation imply prods stable by pushouts}
Let $\mathbf{V}$ be a variety with RexDFC, $A,B\in \mathbf{V}$ and $f:A\rightarrow B$ be a homomorphism. If in $\mathcal{V}$ binary products are stable by pushouts along $f$, thus $f$ preserves pairs of complementary central elements.
\end{lem}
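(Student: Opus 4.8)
The plan is to run the construction in the proof of Lemma \ref{preservation imply prods stable by pushouts} in reverse: starting from a pair $\vec{e}_{1}\diamond_{A}\vec{e}_{2}$, I will use the stability hypothesis to manufacture an explicit product decomposition of $B$ and then read off directly that $f(\vec{e}_{1})$ and $f(\vec{e}_{2})$ occupy the slots $[\vec{0},\vec{1}]$ and $[\vec{1},\vec{0}]$, which is exactly the definition of $f(\vec{e}_{1})\diamond_{B}f(\vec{e}_{2})$.

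First I fix $\vec{e}_{1},\vec{e}_{2}\in Z(A)$ with $\vec{e}_{1}\diamond_{A}\vec{e}_{2}$ and reproduce the decomposition used in Lemma \ref{preservation imply prods stable by pushouts}. Writing $i$ for the witnessing isomorphism, the associated factor congruences satisfy $\theta^{A}_{\vec{0},\vec{e}_{1}}=Ker(\pi_{1}i)$ and $\theta^{A}_{\vec{1},\vec{e}_{1}}=Ker(\pi_{2}i)=\theta^{A}_{\vec{0},\vec{e}_{2}}$, and Lemma \ref{Definability by principal congruences} identifies these with the principal congruences, so that $A\cong A/\theta^{A}(\vec{1},\vec{e}_{2})\times A/\theta^{A}(\vec{1},\vec{e}_{1})$ with the two projections being the canonical quotient maps $\nu_{\vec{e}_{2}}$ and $\nu_{\vec{e}_{1}}$. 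Pushing these two projections out along $f$, item 2 of Corollary \ref{corollary universal property} tells me the resulting pushout cospan is $B/\theta^{B}(\vec{1},f(\vec{e}_{2}))\xleftarrow{\nu_{f(\vec{e}_{2})}}B\xrightarrow{\nu_{f(\vec{e}_{1})}}B/\theta^{B}(\vec{1},f(\vec{e}_{1}))$. Since binary products are stable by pushouts along $f$ by hypothesis and the top cospan is a product, the bottom cospan is a product too; that is, $\tau=\langle\nu_{f(\vec{e}_{2})},\nu_{f(\vec{e}_{1})}\rangle\colon B\to B/\theta^{B}(\vec{1},f(\vec{e}_{2}))\times B/\theta^{B}(\vec{1},f(\vec{e}_{1}))$ is an isomorphism.

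It then remains to evaluate $\tau$ on $f(\vec{e}_{1})$ and $f(\vec{e}_{2})$. From $[\vec{e}_{1},\vec{0}]\in\theta^{A}_{\vec{0},\vec{e}_{1}}=\theta^{A}(\vec{1},\vec{e}_{2})$ and the fact that $f$ is a homomorphism fixing the nullary terms $\vec{0},\vec{1}$, this membership is transported forward to $[f(\vec{e}_{1}),\vec{0}]\in\theta^{B}(\vec{1},f(\vec{e}_{2}))$; symmetrically $[f(\vec{e}_{2}),\vec{0}]\in\theta^{B}(\vec{1},f(\vec{e}_{1}))$. Combining these with the tautological memberships $[\vec{1},f(\vec{e}_{1})]\in\theta^{B}(\vec{1},f(\vec{e}_{1}))$ and $[\vec{1},f(\vec{e}_{2})]\in\theta^{B}(\vec{1},f(\vec{e}_{2}))$ and computing the two coordinates of $\tau$, I obtain $\tau(f(\vec{e}_{1}))=[\vec{0},\vec{1}]$ and $\tau(f(\vec{e}_{2}))=[\vec{1},\vec{0}]$. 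Thus $\tau$ is an isomorphism realizing $f(\vec{e}_{1})\diamond_{B}f(\vec{e}_{2})$, which is the assertion.

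The main difficulty I anticipate is bookkeeping rather than conceptual. Two points must be pinned down carefully: that the legs of the pushout cospan are genuinely the canonical quotient maps $\nu_{f(\vec{e}_{j})}$ (so that the coordinates of $\tau$ are literally quotient maps and the images can be read off coordinatewise), and that the forward-transport of congruence membership along $f$ is legitimate, i.e.\ $(a,b)\in\theta^{A}(\vec{1},\vec{e}_{j})$ implies $(f(a),f(b))\in\theta^{B}(\vec{1},f(\vec{e}_{j}))$. The latter holds because $f$ carries the generating pairs of $\theta^{A}(\vec{1},\vec{e}_{j})$ to generating pairs of $\theta^{B}(\vec{1},f(\vec{e}_{j}))$ and preserves $\vec{0}$ and $\vec{1}$; it can also be read off from the commuting square of Lemma \ref{diagram pushout}. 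Once these are settled, the coordinate computation is immediate from the definitions of the principal congruences involved.
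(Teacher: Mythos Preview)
Your proof is correct and follows essentially the same route as the paper: decompose $A$ via Lemma \ref{Definability by principal congruences}, push out the two quotient projections along $f$ using Corollary \ref{corollary universal property}, and apply the stability hypothesis to obtain a product decomposition of $B$. The only difference is in the closing step: the paper notes that $\theta^{B}(\vec{1},f(\vec{e}_{1}))\diamond\theta^{B}(\vec{1},f(\vec{e}_{2}))$ in $Con(B)$ and then invokes Corollary \ref{Centrals are determined by principal congruences} to recognize $f(\vec{e}_{1}),f(\vec{e}_{2})$ as the corresponding complementary central elements, whereas you compute $\tau(f(\vec{e}_{j}))$ coordinatewise and read off $[\vec{0},\vec{1}]$ and $[\vec{1},\vec{0}]$ directly from the definition; your finish is marginally more self-contained, but the two arguments are really the same.
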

\begin{proof}
Let $A\in \mathcal{V}$ and $\vec{e}$ be a central element of $A$. If $\vec{g}$ denotes the complementary central element of $\vec{e}$, by Lemma \ref{Definability by principal congruences} we get that $\theta^{A}_{\vec{0},\vec{e}}=\theta^{A}(\vec{1},\vec{g})$, and consequently that $A\cong A/\theta^{A}(\vec{1},\vec{g})\times A/\theta^{A}(\vec{1},\vec{e})$. Let us, consider the diagram
\begin{displaymath}
\xymatrix{
A/\theta^{A}(\vec{1},\vec{g}) \ar[d] & \ar[l] \ar[r] \ar[d]^-{f} A & A/\theta^{A}(\vec{1},\vec{e}) \ar[d]
\\
B/\theta^{B}(\vec{1},f(\vec{g})) & \ar[l] \ar[r] B & B/\theta^{B}(\vec{1},f(\vec{e}))
}
\end{displaymath}

By Corollary \ref{corollary universal property} both squares are pushouts, so, since binary products are stable by pushouts along $f$ by assumption, the span $B/\theta^{B}(\vec{1},f(\vec{g}))\leftarrow B\rightarrow B/\theta^{B}(\vec{1},f(\vec{e}))$ is a product. This fact implies directly that $\theta^{B}(\vec{1},f(\vec{g}))\diamond \theta^{B}(\vec{1},f(\vec{e}))$ in $Con(B)$. From Corollary \ref{Centrals are determined by principal congruences}, we obtain that both $f(\vec{e})$ and $f(\vec{g})$ are central elements of $B$. Hence, we conclude that $f(\vec{e})\diamond_{B} f(\vec{g})$. 
\end{proof}

\begin{lem}\label{0 and 1 imply products are codisjoint}
If $\mathbf{V}$ is a variety with $\vec{0}$ and $\vec{1}$, then, in $\mathcal{V}$ the pushout of the projections of binary products is the terminal object. 
\end{lem}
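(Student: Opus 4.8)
The plan is to show directly that the pushout object is the trivial one-element algebra, which is the terminal object of the algebraic category $\mathcal{V}$. Fix $A_{1},A_{2}\in\mathbf{V}$ and write $\pi_{1}\colon A_{1}\times A_{2}\to A_{1}$, $\pi_{2}\colon A_{1}\times A_{2}\to A_{2}$ for the two projections. Since $\mathcal{V}$ is a variety it is cocomplete, so the pushout exists; I denote it by $A_{1}\xrightarrow{i_{1}}P\xleftarrow{i_{2}}A_{2}$, so that by the definition of the pushout the square commutes, i.e. $i_{1}\circ\pi_{1}=i_{2}\circ\pi_{2}$.

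The key step is to evaluate this identity at the distinguished tuple $[\vec{0},\vec{1}]\in(A_{1}\times A_{2})^{n}$. By the very definition of $[\vec{0},\vec{1}]$ one has $\pi_{1}([\vec{0},\vec{1}])=\vec{0}^{A_{1}}$ and $\pi_{2}([\vec{0},\vec{1}])=\vec{1}^{A_{2}}$, where the projections act componentwise. Because $i_{1}$ and $i_{2}$ are homomorphisms and each $0_{k},1_{k}$ is a nullary term, they preserve these constants; hence, componentwise, $i_{1}(\vec{0}^{A_{1}})=\vec{0}^{P}$ and $i_{2}(\vec{1}^{A_{2}})=\vec{1}^{P}$. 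Substituting into $i_{1}\circ\pi_{1}=i_{2}\circ\pi_{2}$ evaluated at $[\vec{0},\vec{1}]$ yields $\vec{0}^{P}=\vec{1}^{P}$.

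Finally, since $P\in\mathbf{V}$ and $\mathbf{V}\models\vec{0}\approx\vec{1}\rightarrow x\approx y$, the equality $\vec{0}^{P}=\vec{1}^{P}$ forces $x=y$ for all $x,y\in P$; as $P$ is nonempty (it contains the interpreted constants) it is the one-element algebra. The one-element algebra is terminal in $\mathcal{V}$, because it admits a unique homomorphism from every algebra, so the pushout of the projections is the terminal object, as claimed.

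I expect the argument to be essentially immediate once the right test element is chosen; the only point requiring care is the bookkeeping that $[\vec{0},\vec{1}]$ projects to $\vec{0}$ on the left and to $\vec{1}$ on the right, and that homomorphisms preserve the constant tuples componentwise, so that the commuting square collapses $\vec{0}$ and $\vec{1}$ in $P$. There is no genuine obstacle beyond correctly invoking the defining implication of a variety with $\vec{0}$ and $\vec{1}$ to pass from $\vec{0}^{P}=\vec{1}^{P}$ to triviality.
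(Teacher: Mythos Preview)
Your argument is correct and follows essentially the same approach as the paper's own proof: both evaluate the commuting pushout square at the element $[\vec{0},\vec{1}]$ to force $\vec{0}=\vec{1}$ in the pushout, and then invoke the defining implication $\vec{0}\approx\vec{1}\rightarrow x\approx y$ to conclude the pushout is trivial, hence terminal. Your version simply spells out a few steps (existence of the pushout via cocompleteness, preservation of constants by homomorphisms, nonemptiness of $P$) that the paper leaves implicit.
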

\begin{proof}
Since $\mathcal{V}$ is an algebraic category, for every pair of $A,B\in \mathcal{V}$ the pushout of the projections $A\leftarrow A\times B \rightarrow B$ belongs to $\mathcal{V}$. It is clear that the projections send $[\vec{0},\vec{1}]\in A\times B$ into $\vec{0}$ in $A$ and into $\vec{1}$ in $B$, so $\vec{0}=\vec{1}$ in the pushout. Since $\mathbf{V}$ is a variety with $\vec{0}$ and $\vec{1}$, it follows that the pushout must be the terminal object.
\end{proof}

\begin{theo}\label{scc is equivalent to coextensivity}
Let $\mathbf{V}$ be a variety with RexDFC. The following are equivalent:
\begin{enumerate}
\item The relation $\vec{e}\diamond_{A} \vec{f}$ is definable by a formula $\exists \bigwedge p=q$.
\item $\mathbf{V}$ is stable by complements.  
\item $\mathcal{V}$ is coextensive.
\end{enumerate}
\end{theo}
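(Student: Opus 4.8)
The plan is to prove the theorem by establishing the cycle of implications $1 \Leftrightarrow 2$ and $2 \Leftrightarrow 3$, since the equivalence of the first two items is already furnished in its entirety by Lemma \ref{Characterization in terms of definability}. Thus it remains only to connect stability by complements (item 2) with coextensivity of $\mathcal{V}$ (item 3), and the real content of the proof is this last equivalence.

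First I would dispatch the direction $2 \Rightarrow 3$. The strategy is to verify the two conditions of Proposition \ref{Coextensivity Proposition} in $\mathcal{V}^{op}$, i.e. to check that $\mathcal{V}^{op}$ is extensive, which is the same as showing that $\mathcal{V}$ has the appropriate \emph{co}extensivity conditions: products are disjoint and products are universal (stable under pushout) in $\mathcal{V}$. Disjointness of products amounts to showing that the pushout of the two projections $A \leftarrow A \times B \rightarrow B$ is the terminal object together with the projections being epic; the first half is exactly Lemma \ref{0 and 1 imply products are codisjoint}, and the projections of a nontrivial product are manifestly surjective hence epic in $\mathcal{V}$. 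Universality of products, which is the coextensive analogue of ``coproducts are universal'', is precisely the statement that binary products are stable under pushout, and this is supplied by Lemma \ref{preservation imply prods stable by pushouts} under the hypothesis of stability by complements. Assembling these pieces against Proposition \ref{Coextensivity Proposition} (read in the opposite category) yields that $\mathcal{V}$ is coextensive.

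For the converse $3 \Rightarrow 2$, I would extract from coextensivity of $\mathcal{V}$ exactly the stability of binary products under pushout and then invoke Lemma \ref{reciprocal preservation imply prods stable by pushouts}. Concretely, given any homomorphism $f:A\rightarrow B$ and any central element $\vec{e}\in Z(A)$ with complement $\vec{g}$, coextensivity guarantees that the products appearing in the decomposition $A\cong A/\theta^{A}(\vec{1},\vec{g})\times A/\theta^{A}(\vec{1},\vec{e})$ are preserved by pushout along $f$; Lemma \ref{reciprocal preservation imply prods stable by pushouts} then converts this categorical stability directly into the conclusion that $f(\vec{e})\diamond_{B} f(\vec{g})$, i.e. $f$ preserves complementary central elements. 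Since $f$ and $\vec{e}$ were arbitrary, $\mathbf{V}$ is stable by complements.

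The main obstacle I anticipate is bookkeeping around the translation between the abstract extensivity axioms of Proposition \ref{Coextensivity Proposition} and the concrete pushout-stability statements of Lemmas \ref{preservation imply prods stable by pushouts} and \ref{reciprocal preservation imply prods stable by pushouts}: one must be careful that ``pullbacks along injections'' in $\mathcal{C}^{op}$ correspond to pushouts along projections in $\mathcal{V}$, that the coproduct cospans of the proposition become product spans here, and that the disjointness square dualizes to the codisjointness (terminal pushout) of Lemma \ref{0 and 1 imply products are codisjoint}. Once this dictionary is fixed, each axiom matches a lemma essentially on the nose, so the argument is short; the only genuine verification needed beyond the cited lemmas is that the projections are epic and that the required pullbacks along injections exist in $\mathcal{V}^{op}$, both of which are routine for an algebraic category.
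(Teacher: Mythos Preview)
Your proposal is correct and follows essentially the same route as the paper: the equivalence $1\Leftrightarrow 2$ is taken directly from Lemma~\ref{Characterization in terms of definability}, the implication $2\Rightarrow 3$ is obtained by feeding Lemmas~\ref{0 and 1 imply products are codisjoint} and~\ref{preservation imply prods stable by pushouts} (together with epicness of projections) into the dual of Proposition~\ref{Coextensivity Proposition}, and $3\Rightarrow 2$ is read off from Lemma~\ref{reciprocal preservation imply prods stable by pushouts}. The only difference is that you spell out the dualization dictionary more explicitly than the paper does.
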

\begin{proof}
Observe that, from Lemma \ref{Characterization in terms of definability}, it follows that $1.$ is equivalent to $2.$ On the one hand, notice that, since $\mathbf{V}$ has BFC, the variety is a variety with $\vec{0}$ and $\vec{1}$, which by Lemma \ref{0 and 1 imply products are codisjoint}, implies that in $\mathcal{V}$ the pushout of the projections of binary products is the terminal object. Since $\mathcal{V}$ is an algebraic category, it is also clear that the projections are epi. In order to prove $2.$ implies $3.$ let us assume that $\mathbf{V}$ is stable by complements. From Lemma \ref{preservation imply prods stable by pushouts}, binary products are stable by pushouts. Hence, by the dual of Proposition \ref{Coextensivity Proposition}, $\mathcal{V}$ is coextensive. Finally, from Lemma \ref{reciprocal preservation imply prods stable by pushouts} we get that $3.$ implies $2$. 
\end{proof}

We conclude this section by recalling that in \cite{CPR2001} there is a characterization of coextensive algebraic categories. It is worth mentioning that this result is stated in strictly categorical terms. However, for the context in which we are working on this paper, we emphasize that Theorem \ref{scc is equivalent to coextensivity} provides the possibility to a non-categoricist mathematician to solve a categorical problem through algebraic methods, by verifying whether the morphisms in the variety preserve complementary central elements; or even through syntactical methods, by examining if a relation between central elements can be defined through a formula of determined complexity.

\section{RexDFC and stability by complements induce homomorphisms of Boolean algebras}\label{RexDFC and stability by complements induce homomorphisms of Boolean algebras}

As we saw in Section \ref{Coextensivity,  definability and stabitlity by complements}, not every variety with BFC has center stable by complements. In this section we prove that a variety with RexDFC having center stable by complements is in fact a variety with the Fraser-Horn Property. This result will allow us to prove that every homomorphism $f$ in the variety induces a Boolean algebra homomorphism between the centers of $dom(f)$ and $cod(f)$. 

\begin{lem}[Theorem 1 \cite{FH1970}]\label{Theorem 1 FHP}
Let $\textbf{K}$ be a variety and $A$, $B$ be algebras of $\textbf{K}$. The following are equivalent:
\begin{enumerate}
\item $\textbf{K}$ has FHP.
\item For every $A,B\in \textbf{K}$ and $\gamma\in Con(A\times B)$,
\[\Pi_{1}\cap (\Pi_{2} \vee \gamma) \subseteq \gamma\; \textrm{and}\; \Pi_{2}\cap (\Pi_{1} \vee \gamma) \subseteq \gamma\]
where $\Pi_{1}$ is the kernel of the projection on $A$ and $\Pi _{2}$ is the kernel of the projection on $B$.
\end{enumerate} 
\end{lem}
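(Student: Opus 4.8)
The plan is to prove the two implications separately, but both will rest on a single structural observation about congruences of $A\times B$ lying above a projection kernel. Writing $\eta_{1}=\Pi_{1}\vee\gamma$ and $\eta_{2}=\Pi_{2}\vee\gamma$, I would first show that any congruence of $A\times B$ containing $\Pi_{1}$ factorizes as $\gamma_{1}\times\nabla^{B}$ for a unique $\gamma_{1}\in Con(A)$, and dually that any congruence containing $\Pi_{2}$ factorizes as $\nabla^{A}\times\gamma_{2}$. This is elementary: since $\Pi_{1}=\Delta^{A}\times\nabla^{B}$ allows one to slide the second coordinate freely, the relation $\gamma_{1}=\{(a,a')\mid ((a,b),(a',b))\in\eta_{1}\}$ is independent of $b$ and is a congruence of $A$ with $\eta_{1}=\gamma_{1}\times\nabla^{B}$, and the dual statement is symmetric. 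Consequently $\eta_{1}\cap\eta_{2}=\gamma_{1}\times\gamma_{2}$ is a product congruence, and since $\gamma\subseteq\eta_{1}$ and $\gamma\subseteq\eta_{2}$ we always have $\gamma\subseteq\eta_{1}\cap\eta_{2}$. The crux is then that $\gamma$ factorizes if and only if this inclusion is an equality.

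For the direction $2\Rightarrow 1$, I would assume the two containments of item $2$ and take any $((a,b),(a',b'))\in\eta_{1}\cap\eta_{2}=\gamma_{1}\times\gamma_{2}$, showing it lies in $\gamma$ by routing through the mediating element $(a',b)$. On one hand $((a,b),(a',b))$ has equal second coordinates, so it lies in $\Pi_{2}$, and it also lies in $\eta_{1}=\Pi_{1}\vee\gamma$ because $(a,a')\in\gamma_{1}$; hence it belongs to $\Pi_{2}\cap(\Pi_{1}\vee\gamma)\subseteq\gamma$. On the other hand $((a',b),(a',b'))$ has equal first coordinates, so it lies in $\Pi_{1}$, and it lies in $\eta_{2}=\Pi_{2}\vee\gamma$ because $(b,b')\in\gamma_{2}$; hence it belongs to $\Pi_{1}\cap(\Pi_{2}\vee\gamma)\subseteq\gamma$. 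Transitivity of $\gamma$ then yields $((a,b),(a',b'))\in\gamma$, so $\gamma=\gamma_{1}\times\gamma_{2}$ factorizes. Applying this to every binary product, and lifting to arbitrary finite products by the associativity of $\times$ together with an induction on the number of factors, gives FHP.

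For $1\Rightarrow 2$, suppose $\textbf{K}$ has FHP. Then $\gamma$ itself factorizes, so by the structural observation $\gamma=\eta_{1}\cap\eta_{2}$. Since $\Pi_{1}\subseteq\eta_{1}$ we obtain $\Pi_{2}\cap(\Pi_{1}\vee\gamma)=\Pi_{2}\cap\eta_{1}\subseteq\eta_{2}\cap\eta_{1}=\gamma$, and symmetrically $\Pi_{1}\cap(\Pi_{2}\vee\gamma)\subseteq\gamma$, which is exactly item $2$.

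The only genuinely delicate point is the intermediate-element argument in $2\Rightarrow 1$: one must select the mediator $(a',b)$ so that the two elementary steps fall respectively under the two hypotheses of item $2$, each step pairing a projection kernel (detected by a coincidence of coordinates) with the matching join $\Pi_{i}\vee\gamma$ (detected by membership in the factor $\gamma_{1}$ or $\gamma_{2}$). Everything else is the routine verification that a congruence above a projection kernel is a product congruence, which I would isolate as a short preliminary claim before carrying out the two implications.
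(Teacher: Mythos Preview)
The paper does not supply its own proof of this lemma: it is quoted verbatim as Theorem~1 of \cite{FH1970} and used as a black box. Your argument is correct and is essentially the classical Fraser--Horn proof. The preliminary claim that any congruence above $\Pi_{1}$ has the form $\gamma_{1}\times\nabla^{B}$ is standard, and your mediating-element step through $(a',b)$ is exactly the mechanism in the original paper. One small expository point: in the direction $1\Rightarrow 2$ you write ``by the structural observation $\gamma=\eta_{1}\cap\eta_{2}$,'' but the structural observation as you stated it only describes congruences \emph{above} a projection kernel; what you actually need here is the easy fact that if $\gamma=\delta_{1}\times\delta_{2}$ then $\Pi_{i}\vee\gamma=\delta_{1}\times\nabla^{B}$ (resp.\ $\nabla^{A}\times\delta_{2}$), which follows because joins of product congruences are products of joins. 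That is a one-line verification, so there is no real gap, but it would be worth making explicit.
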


\begin{lem}[Theorem 3 \cite{FH1970}]\label{Theorem 3 FHP}
Let $A$ and $B$ be algebras of the same type. The following are equivalent:
\begin{enumerate}
\item $A\times B$ has FHP.
\item For every $a,c\in A$ and $b,d\in B$,
\[\theta^{A\times B}((a,b),(c,d))=\theta^{A}(a,c)\times \theta^{B}(b,d) \] 
\end{enumerate}
\end{lem}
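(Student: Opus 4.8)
The plan is to prove the two implications separately, the crux of both being the elementary observation that ``rectangular'' congruences on $A\times B$ are closed under arbitrary joins. So I would first isolate the following fact: if $\alpha_{i}\in Con(A)$ and $\beta_{i}\in Con(B)$ for $i$ ranging over any index set, then $\bigvee_{i}(\alpha_{i}\times\beta_{i})=(\bigvee_{i}\alpha_{i})\times(\bigvee_{i}\beta_{i})$. The inclusion $\subseteq$ is immediate, since the right-hand side is a congruence on $A\times B$ containing every $\alpha_{i}\times\beta_{i}$. For $\supseteq$ I would use that the join of a family of congruences is the transitive closure of the union of the family, so a pair $((x,y),(x',y'))$ with $(x,x')\in\bigvee_{i}\alpha_{i}$ and $(y,y')\in\bigvee_{i}\beta_{i}$ comes equipped with a connecting chain in each coordinate. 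Traversing the ``corner path'' $(x,y)\to(x',y)\to(x',y')$, each single step keeps one coordinate fixed, and reflexivity of the relevant congruence on the fixed coordinate places that step inside some $\alpha_{i}\times\beta_{i}$; concatenating shows the pair lies in $\bigvee_{i}(\alpha_{i}\times\beta_{i})$.

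For $1.\Rightarrow 2.$ I would invoke FHP to factorize the principal congruence $\theta^{A\times B}((a,b),(c,d))$, say as $\theta_{1}\times\theta_{2}$ with $\theta_{1}\in Con(A)$ and $\theta_{2}\in Con(B)$. Projecting the generating pair onto each factor gives $(a,c)\in\theta_{1}$ and $(b,d)\in\theta_{2}$, whence $\theta^{A}(a,c)\times\theta^{B}(b,d)\subseteq\theta_{1}\times\theta_{2}=\theta^{A\times B}((a,b),(c,d))$. The reverse inclusion needs no hypothesis at all: $\theta^{A}(a,c)\times\theta^{B}(b,d)$ is a congruence on $A\times B$ containing $((a,b),(c,d))$, so it contains the principal congruence that this pair generates. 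Equality then follows.

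For $2.\Rightarrow 1.$ I would take an arbitrary $\gamma\in Con(A\times B)$ and write it as the join of the principal congruences of its own pairs, $\gamma=\bigvee_{((a,b),(c,d))\in\gamma}\theta^{A\times B}((a,b),(c,d))$, which is a standard fact. Replacing each summand by $\theta^{A}(a,c)\times\theta^{B}(b,d)$ using hypothesis $2.$, and then applying the join lemma of the first paragraph, collapses the whole expression to a single rectangular congruence, namely $\gamma=(\bigvee\theta^{A}(a,c))\times(\bigvee\theta^{B}(b,d))$. Thus $\gamma$ factorizes, and since $\gamma$ was arbitrary this is precisely FHP for $A\times B$.

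The only genuine obstacle is the join lemma, and within it the verification that the corner-path steps really land in the rectangular pieces; the two points I would be careful to record are that the join of congruences coincides with the transitive closure of the union (so the connecting chains I manipulate are actually available) and that $\theta^{A}(a,c)\times\theta^{B}(b,d)$ is legitimately a congruence on $A\times B$. Everything past that is bookkeeping, and neither implication requires any hypothesis on the ambient variety beyond what the statement already provides.
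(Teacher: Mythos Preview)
The paper does not give its own proof of this lemma; it is simply quoted as Theorem~3 of \cite{FH1970} and left unproved. So there is nothing in the paper to compare your argument against.

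That said, your proposal is correct and is essentially the classical Fraser--Horn argument. The join lemma $\bigvee_{i}(\alpha_{i}\times\beta_{i})=(\bigvee_{i}\alpha_{i})\times(\bigvee_{i}\beta_{i})$ is the right engine: the corner-path trick works because each step $(x_{j},y)\to(x_{j+1},y)$ lies in $\alpha_{i_{j}}\times\beta_{i_{j}}$ by reflexivity of $\beta_{i_{j}}$, and the transitive-closure description of joins of congruences is exactly what licenses the chains. Both implications then go through as you wrote them: $1.\Rightarrow 2.$ by factoring the principal congruence and squeezing, $2.\Rightarrow 1.$ by writing an arbitrary $\gamma$ as the join of its principal subcongruences and collapsing via the join lemma. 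No additional hypotheses on the variety are needed, and you correctly flagged the two points that actually require care (that $\theta^{A}(a,c)\times\theta^{B}(b,d)$ is a congruence, and that joins are transitive closures of unions).
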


\begin{lem}\label{useful lema Malsev}
Let $A$ and $B$ be algebras with finite $n$-ary function symbols and $f:A\rightarrow B$ an homomorphism. If $(a,b)\in \theta^{A}(\vec{c},\vec{d})$, then $(f(a),f(b))\in \theta^{B}(f(\vec{c}),f(\vec{d}))$. Thus, if $[\vec{a},\vec{b}]\in \theta^{A}(\vec{c},\vec{d})$ then $[f(\vec{a}),f(\vec{b})]\in \theta^{A}(f(\vec{c}),f(\vec{d}))$.
\end{lem}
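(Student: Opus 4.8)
The plan is to invoke the Gr\"atzer--Mal'cev characterization of principal congruences (Lemma \ref{Gratzer Malsev Lemma}) and transport the witnessing term chain across $f$, using only that a homomorphism commutes with term operations. The whole statement is a functoriality property of principal congruences, so the proof is essentially mechanical; the task is just to check that the Mal'cev data survives the application of $f$.

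First I would apply Lemma \ref{Gratzer Malsev Lemma} to the hypothesis $(a,b)\in\theta^{A}(\vec{c},\vec{d})$. This yields an odd $k$, $(n+m)$-ary terms $t_{1},\dots,t_{k}$ and a parameter tuple $\vec{\lambda}\in A^{m}$ satisfying $a=t_{1}^{A}(\vec{c},\vec{\lambda})$, $b=t_{k}^{A}(\vec{d},\vec{\lambda})$, together with the equalities $t_{i}^{A}(\vec{c},\vec{\lambda})=t_{i+1}^{A}(\vec{c},\vec{\lambda})$ for even $i$ and $t_{i}^{A}(\vec{d},\vec{\lambda})=t_{i+1}^{A}(\vec{d},\vec{\lambda})$ for odd $i$. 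Next I would apply $f$ to each of these equations. Since $f$ is a homomorphism, $f(t_{i}^{A}(\vec{x},\vec{u}))=t_{i}^{B}(f(\vec{x}),f(\vec{u}))$ for every $i$; writing $\vec{\mu}=f(\vec{\lambda})\in B^{m}$, the image equations read $f(a)=t_{1}^{B}(f(\vec{c}),\vec{\mu})$, $f(b)=t_{k}^{B}(f(\vec{d}),\vec{\mu})$, with $t_{i}^{B}(f(\vec{c}),\vec{\mu})=t_{i+1}^{B}(f(\vec{c}),\vec{\mu})$ for even $i$ and $t_{i}^{B}(f(\vec{d}),\vec{\mu})=t_{i+1}^{B}(f(\vec{d}),\vec{\mu})$ for odd $i$. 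This is exactly a Mal'cev chain of the required form in $B$, so the converse direction of Lemma \ref{Gratzer Malsev Lemma} gives $(f(a),f(b))\in\theta^{B}(f(\vec{c}),f(\vec{d}))$.

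The parenthetical vector statement then follows componentwise: $[\vec{a},\vec{b}]\in\theta^{A}(\vec{c},\vec{d})$ means $(a_{j},b_{j})\in\theta^{A}(\vec{c},\vec{d})$ for each $j$, whence the scalar case gives $(f(a_{j}),f(b_{j}))\in\theta^{B}(f(\vec{c}),f(\vec{d}))$ for each $j$, i.e. $[f(\vec{a}),f(\vec{b})]\in\theta^{B}(f(\vec{c}),f(\vec{d}))$. There is no genuine obstacle here. The only point requiring a moment of care is that the even/odd pattern of the defining equalities is preserved under $f$ --- which holds simply because $f$ sends equal elements to equal elements --- so the same indexed family of terms, with its parameters transported by $f$, serves as a valid witness in $B$.
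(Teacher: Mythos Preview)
Your proof is correct and follows exactly the approach the paper intends: the paper's own proof is the single line ``Apply Lemma \ref{Gratzer Malsev Lemma},'' and you have simply spelled out that application in full detail. Nothing more is needed.
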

\begin{proof}
Apply Lemma \ref{Gratzer Malsev Lemma}. 
\end{proof}

\begin{lem}\label{RexDFC implies FHP}
Let $\V$ be a variety with RexDFC, $A\in \V$ and $\vec{e},\vec{f}\in Z(A)$ such that $\vec{e}\diamond_{A}\vec{f}$. If for every $\gamma\in Con(A)$, $\vec{e}/\gamma \diamond_{A/\gamma} \vec{f}/\gamma$, then $\V$ has the FHP. 
\end{lem}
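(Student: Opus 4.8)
The plan is to reduce the Fraser–Horn Property to the criterion supplied by Lemma \ref{Theorem 1 FHP}, so that it suffices to verify, for every pair $A,B \in \V$ and every $\gamma \in Con(A\times B)$, that $\Pi_{1}\cap(\Pi_{2}\vee\gamma)\subseteq\gamma$ and symmetrically $\Pi_{2}\cap(\Pi_{1}\vee\gamma)\subseteq\gamma$, where $\Pi_{1}=Ker(\pi_{1})$ and $\Pi_{2}=Ker(\pi_{2})$ are the kernels of the projections of $A\times B$. The hypothesis is precisely that complementarity of central elements is preserved under passage to arbitrary quotients $A/\gamma$; the task is to convert this quotient-stability into the factorization-type inequalities of Lemma \ref{Theorem 1 FHP}.

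First I would fix $A,B\in\V$ and a congruence $\gamma$ on $P=A\times B$, and observe that $\Pi_{1}$ and $\Pi_{2}$ are complementary factor congruences of $P$, corresponding under the bijection of Theorem \ref{Bijection betwen centrals and factor congruences} (and Corollary \ref{Centrals are determined by principal congruences}) to a pair of complementary central elements of $P$, namely $\vec{e}=[\vec{1},\vec{0}]$ and $\vec{f}=[\vec{0},\vec{1}]$, so that $\vec{e}\diamond_{P}\vec{f}$ with $\Pi_{1}=\theta^{P}_{\vec{0},\vec{e}}=\theta^{P}(\vec{1},\vec{f})$ and $\Pi_{2}=\theta^{P}(\vec{1},\vec{e})$ by Lemma \ref{Definability by principal congruences}. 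Applying the hypothesis to $P$ and the congruence $\gamma$, I get $\vec{e}/\gamma\diamond_{P/\gamma}\vec{f}/\gamma$; hence in $P/\gamma$ the images of $\Pi_{1}$ and $\Pi_{2}$ under the canonical projection $\nu_{\gamma}\colon P\to P/\gamma$ are again complementary factor congruences. Here I would use that, by RexDFC and Lemma \ref{useful lema Malsev}, the principal congruence $\theta^{P}(\vec{1},\vec{e})$ is carried by $\nu_{\gamma}$ to $\theta^{P/\gamma}(\vec{1},\vec{e}/\gamma)$, which is exactly $\theta^{P/\gamma}_{\vec{1},\vec{e}/\gamma}$; and analogously for $\vec{f}$.

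The key step is then to translate the statement ``$(a,b)\in\Pi_{1}\cap(\Pi_{2}\vee\gamma)$'' into a statement about $P/\gamma$ and close the argument using that the images of $\Pi_{1},\Pi_{2}$ are \emph{complementary} (hence their meet is $\Delta^{P/\gamma}$) in $P/\gamma$. Concretely, in the quotient the image of $\Pi_{2}\vee\gamma$ is the image of $\Pi_{2}$ joined with $\Delta$, i.e. just the image of $\Pi_{2}$, while the image of $\Pi_{1}$ is its factor complement; since these are complementary factor congruences their intersection collapses to the diagonal of $P/\gamma$. Unwinding along $\nu_{\gamma}$, a pair lying in $\Pi_{1}\cap(\Pi_{2}\vee\gamma)$ is forced to have equal images in $P/\gamma$, which is precisely the assertion that the pair lies in $\gamma$; this yields $\Pi_{1}\cap(\Pi_{2}\vee\gamma)\subseteq\gamma$, and the symmetric inequality follows by interchanging the roles of $\vec{e}$ and $\vec{f}$ (equivalently of $A$ and $B$). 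With both inequalities established, Lemma \ref{Theorem 1 FHP} gives that $\V$ has the FHP.

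The main obstacle I anticipate is the careful bookkeeping in the key step: one must verify that forming the image of a join of congruences under $\nu_{\gamma}$ behaves as expected (that $\nu_{\gamma}(\Pi_{2}\vee\gamma)$ really is $\nu_{\gamma}(\Pi_{2})$, using that $\gamma\subseteq Ker(\nu_{\gamma})$), and that the complementarity obtained from the hypothesis genuinely forces the intersection down to the diagonal rather than merely to some small congruence. The permutability of factor congruences guaranteed by Lemma \ref{BFC has permutable congruences}, together with Theorem \ref{Bijection betwen centrals and factor congruences} applied in $P/\gamma$, should make this precise; the delicate point is simply to keep the correspondence between central elements, their associated factor congruences, and their principal-congruence descriptions consistent across $P$ and $P/\gamma$.
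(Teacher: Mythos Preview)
Your proposal is correct and follows essentially the same route as the paper: both reduce to the criterion of Lemma \ref{Theorem 1 FHP}, identify the kernel projections with the principal congruences $\theta^{P}(\vec{1},\vec{e})$ and $\theta^{P}(\vec{1},\vec{f})$ via Lemma \ref{Definability by principal congruences}, push a pair in $\Pi_{1}\cap(\Pi_{2}\vee\gamma)$ down to $P/\gamma$, and use the hypothesis $\vec{e}/\gamma\diamond_{P/\gamma}\vec{f}/\gamma$ to land in $\Delta^{P/\gamma}$. The only difference is cosmetic: the paper carries out your ``key step'' by writing the explicit alternating chain $c_{0},\dots,c_{N}$ witnessing membership in $\theta^{P}(\vec{1},\vec{f})\vee\gamma$, applying Lemma \ref{useful lema Malsev} to each $\theta^{P}(\vec{1},\vec{f})$-link and collapsing each $\gamma$-link, which is exactly the verification you flag in your final paragraph.
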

\begin{proof}
Since $\V$ has RexDFC, from Lemma \ref{Definability by principal congruences},  for every $\vec{e}\in Z(A)$, $\theta^{A}_{\vec{1},\vec{e}}=\theta^{A}(\vec{1},\vec{e})$. So, if $\vec{e}\diamond_{A}\vec{f}$ then $\theta^{A}_{\vec{0},\vec{e}}=\theta^{A}(\vec{1},\vec{f})$. We use Lemma \ref{Theorem 1 FHP}. To do so, we prove $\theta^{A}(\vec{1},\vec{e})\cap (\theta^{A}(\vec{1},\vec{f})\vee \gamma)\subseteq \gamma$. Suppose $(x,y)\in \theta^{A}(\vec{1},\vec{e})\cap (\theta^{A}(\vec{1},\vec{f})\vee \gamma)\subseteq \gamma$, then, $(x,y)\in \theta^{A}(\vec{1},\vec{e})$ and there are $c_{0},...,c_{N}\in A$, with $c_{0}=x$ and $c_{N}=y$, such that $(c_{2i},c_{2i+1})\in \theta^{A}(\vec{1},\vec{f})$ and $(c_{2i+1},c_{2(i+1)})\in \gamma$. Since $A\rightarrow A/\gamma$ is clearly a homomorphism, from Lemma \ref{useful lema Malsev}, we obtain that $(x/\gamma,y/\gamma)\in \theta^{A/\gamma}(\vec{1}/\gamma,\vec{e}/\gamma)$, $(c_{2i}/\gamma,c_{2i+1}/\gamma)\in \theta^{A/\gamma}(\vec{1}/\gamma,\vec{f}/\gamma)$ and $c_{2i+1}/\gamma=c_{2(i+1)}/\gamma$. From transitivity of $\theta^{A/\gamma}(\vec{1}/\gamma,\vec{f}/\gamma)$, we get that $(x/\gamma,y/\gamma)\in \theta^{A/\gamma}(\vec{1}/\gamma,\vec{f}/\gamma)$. Therefore, $(x/\gamma,y/\gamma)\in \theta^{A/\gamma}(\vec{1}/\gamma,\vec{e}/\gamma) \cap \theta^{A/\gamma}(\vec{1}/\gamma,\vec{f}/\gamma)=\Delta^{A/\gamma}$, since $\vec{e}/\gamma \diamond_{A/\gamma} \vec{f}/\gamma$ by assumption, so $(x,y)\in \gamma$. The proof of $\theta^{A}(\vec{1},\vec{f})\cap (\theta^{A}(\vec{1},\vec{e})\vee \gamma)\subseteq \gamma$ is similar. This concludes the proof.
\end{proof}

\begin{coro}\label{RexDFC with SCC has FHP}
Let $\V$ be a variety with RexDFC. If $\V$ is stable by complements then has FHP. 
\end{coro}
\begin{proof}
Inmmediate from Lemma \ref{RexDFC implies FHP}.
\end{proof}

\begin{lem}\label{FHP implies TexDFC}
Every variety $\V$ with FHP is TexDFC.
\end{lem}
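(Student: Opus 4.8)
The plan is to establish both halves of TexDFC (namely RexDFC and LexDFC) by verifying, for every $A\in\V$ and $\vec{e}\in Z(A)$, the two principal-congruence identities $\theta^{A}_{\vec{1},\vec{e}}=\theta^{A}(\vec{1},\vec{e})$ and $\theta^{A}_{\vec{0},\vec{e}}=\theta^{A}(\vec{0},\vec{e})$, and then invoking Lemma \ref{Definability by principal congruences} together with its left-handed mirror. (Throughout I assume, as in the rest of the paper, that $\V$ has $\vec{0}$ and $\vec{1}$, since otherwise TexDFC is not even defined.) First I would note that FHP forces BFC: under FHP every congruence on a finite direct product factorizes, so in particular every factor congruence of a product factorizes, and Lemma \ref{FC Factors equivalent BFC} then yields BFC. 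This is exactly what licenses the use of Lemma \ref{Definability by principal congruences}.

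Next comes the main computation. Fixing $\vec{e}\in Z(A)$ and transporting along the isomorphism witnessing its centrality, I may assume $A=A_{1}\times A_{2}$ and $\vec{e}=[\vec{0},\vec{1}]$, so that directly from the definitions $\theta^{A}_{\vec{1},\vec{e}}=Ker(\pi_{2})=\nabla^{A_{1}}\times\Delta^{A_{2}}$ and $\theta^{A}_{\vec{0},\vec{e}}=Ker(\pi_{1})=\Delta^{A_{1}}\times\nabla^{A_{2}}$. Since $\theta^{A}(\vec{1},\vec{e})=\bigvee_{i}\theta^{A}((1_{i},1_{i}),(0_{i},1_{i}))$, I would apply Lemma \ref{Theorem 3 FHP} to each generator to obtain $\theta^{A}((1_{i},1_{i}),(0_{i},1_{i}))=\theta^{A_{1}}(1_{i},0_{i})\times\Delta^{A_{2}}$, and then use that under FHP the assignment $\theta\mapsto(\theta_{1},\theta_{2})$ is a lattice isomorphism $Con(A_{1}\times A_{2})\cong Con(A_{1})\times Con(A_{2})$, so that these joins factorize componentwise. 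This gives $\theta^{A}(\vec{1},\vec{e})=\theta^{A_{1}}(\vec{1},\vec{0})\times\Delta^{A_{2}}$, and since $\V$ has $\vec{0}$ and $\vec{1}$ the identification $\vec{0}\approx\vec{1}$ collapses $A_{1}$, i.e. $\theta^{A_{1}}(\vec{1},\vec{0})=\nabla^{A_{1}}$; hence $\theta^{A}(\vec{1},\vec{e})=\nabla^{A_{1}}\times\Delta^{A_{2}}=\theta^{A}_{\vec{1},\vec{e}}$. The verbatim mirror, interchanging $\vec{0}$ with $\vec{1}$, gives $\theta^{A}(\vec{0},\vec{e})=\Delta^{A_{1}}\times\nabla^{A_{2}}=\theta^{A}_{\vec{0},\vec{e}}$.

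Finally, having $\theta^{A}_{\vec{1},\vec{e}}=\theta^{A}(\vec{1},\vec{e})$ for every central $\vec{e}$, Lemma \ref{Definability by principal congruences} delivers RexDFC; running the proof of that lemma with the roles of $\vec{0}$ and $\vec{1}$ and of the two factors of the witnessing free product interchanged (e.g. testing first-coordinate equality in $\textbf{F}(x)\times\textbf{F}(x,y)$ on the pair $((x,x),(x,y))$) gives its left-handed analogue, whence $\theta^{A}_{\vec{0},\vec{e}}=\theta^{A}(\vec{0},\vec{e})$ delivers LexDFC; together these are TexDFC. I expect the main obstacle to be the step where joins of factorized congruences are shown to factorize componentwise: this is precisely where FHP is used beyond the single-pair statement of Lemma \ref{Theorem 3 FHP}, and it must be justified via the lattice isomorphism $Con(A_{1}\times A_{2})\cong Con(A_{1})\times Con(A_{2})$. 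A secondary, more bookkeeping obstacle is confirming that the left-handed mirror of Lemma \ref{Definability by principal congruences} really does produce a single existential formula valid across all products, rather than merely a product-by-product witness.
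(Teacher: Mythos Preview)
Your argument is correct, and it differs from the paper's own treatment: the paper simply cites Theorem~1 of \cite{V1999} and gives no details. Your route is internal to the paper's framework, deriving the result from Lemma~\ref{Definability by principal congruences} (and its left-handed mirror, later recorded as Remark~\ref{Definability LexDFC}) by showing directly that FHP forces $\theta^{A}_{\vec{1},\vec{e}}=\theta^{A}(\vec{1},\vec{e})$ and $\theta^{A}_{\vec{0},\vec{e}}=\theta^{A}(\vec{0},\vec{e})$ via the lattice isomorphism $Con(A_{1}\times A_{2})\cong Con(A_{1})\times Con(A_{2})$ and the fact that $\theta(\vec{0},\vec{1})=\nabla$ in a variety with $\vec{0}$ and $\vec{1}$. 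The citation buys brevity and connects to the original source; your approach buys self-containment and makes transparent exactly how FHP enters, which fits well with the paper's emphasis on the principal-congruence characterization. The two points you flagged as potential obstacles are both fine: the componentwise behaviour of joins is an immediate consequence of the bijection $Con(A_{1}\times A_{2})\to Con(A_{1})\times Con(A_{2})$ being order-preserving in both directions, and the left-handed version of Lemma~\ref{Definability by principal congruences} is obtained by the symmetric construction you describe (the paper itself asserts this in Remark~\ref{Definability LexDFC}).
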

\begin{proof}
See Theorem 1 of \cite{V1999}.
\end{proof}

As a straight consequence of Corollary \ref{RexDFC with SCC has FHP} and Lemma \ref{FHP implies TexDFC} we obtain

\begin{coro}\label{RexDFC and SCC implies TexDFC}
Every variety $\V$ with RexDFC and stability by complements is TexDFC.
\end{coro}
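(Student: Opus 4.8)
The plan is to chain the two immediately preceding results, since the statement is a pure composition of them. First I would invoke Corollary \ref{RexDFC with SCC has FHP}: given that $\V$ has RexDFC and is stable by complements, that corollary delivers that $\V$ has the Fraser--Horn Property. This is the substantive input, and it in turn rests on Lemma \ref{RexDFC implies FHP}, where stability by complements guarantees that quotients preserve the complementary-central-element relation, which is exactly the hypothesis needed to verify the Fraser--Horn condition of Lemma \ref{Theorem 1 FHP}.

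Next I would apply Lemma \ref{FHP implies TexDFC}, which asserts that any variety with the FHP is automatically TexDFC (a fact imported from Theorem 1 of \cite{V1999}). Composing the two implications, RexDFC together with stability by complements yields FHP, and FHP yields TexDFC, so $\V$ is TexDFC.

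I do not expect any genuine obstacle here: the corollary is a formal consequence of results already established, and no new construction or calculation is required. The only point worth stating carefully is that the hypothesis ``RexDFC and stability by complements'' is precisely what Corollary \ref{RexDFC with SCC has FHP} consumes, so the two lemmas compose without any intervening condition. Concretely, the proof reads: by Corollary \ref{RexDFC with SCC has FHP}, $\V$ has FHP; by Lemma \ref{FHP implies TexDFC}, $\V$ is therefore TexDFC.
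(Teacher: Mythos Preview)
Your proposal is correct and matches the paper's approach exactly: the paper states the corollary as a straight consequence of Corollary \ref{RexDFC with SCC has FHP} and Lemma \ref{FHP implies TexDFC}, which is precisely the chain you spell out.
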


\begin{rem}\label{Definability LexDFC}
We stress that Lemma \ref{Definability by principal congruences} has an analogue version for varieties with LexDFC. Namely, \emph{a variety $\V$ with BFC has LexDFC if and only if for every $A\in \mathbf{V}$ and $\vec{e}\in Z(A)$, $\theta^{A}_{\vec{0},\vec{e}}=\theta^{A}(\vec{0},\vec{e})$}. Since the proof is basically the same that one the given for the aforementioned Corollary, we leave the details to the reader. 
\end{rem}

\begin{lem}\label{RexDFC and CS entails Lattice morphisms}
Let $\V$ be a variety with RexDFC stable by complements. If $A,B\in \V$ and $f:A\rightarrow B$ is a homomorphism, then $f|_{Z(A)}:Z(A)\rightarrow Z(B)$ is a Boolean algebra homomorphism.
\end{lem}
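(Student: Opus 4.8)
Let $\V$ be a variety with RexDFC stable by complements. If $A,B\in\V$ and $f\colon A\to B$ is a homomorphism, then $f|_{Z(A)}\colon Z(A)\to Z(B)$ is a Boolean algebra homomorphism.

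Let me plan the proof.

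**Understanding the goal.** We need to show $f|_{Z(A)}$ is well-defined (maps central elements to central elements), preserves $\vec{0},\vec{1}$, and commutes with $\wedge,\vee,{}^{c}$. Well-definedness and preservation of complementary pairs come for free from stability by complements. The constants $\vec{0},\vec{1}$ are preserved because $f$ is a homomorphism and these are $0$-ary terms. The real content is preservation of the lattice operations and complement.

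**Available tools.** By Corollary \ref{RexDFC and SCC implies TexDFC}, $\V$ is TexDFC, hence has FHP (Corollary \ref{RexDFC with SCC has FHP}), and thus every congruence on a finite product factors. Crucially, Lemma \ref{useful lema Malsev} gives that $f$ carries principal congruences forward: $[\vec a,\vec b]\in\theta^A(\vec c,\vec d)$ implies $[f(\vec a),f(\vec b)]\in\theta^B(f(\vec c),f(\vec d))$. And Lemma \ref{Definability by principal congruences} identifies $\theta^A_{\vec1,\vec e}=\theta^A(\vec1,\vec e)$, with its left-analogue in Remark \ref{Definability LexDFC} giving $\theta^A_{\vec0,\vec e}=\theta^A(\vec0,\vec e)$ (since $\V$ has LexDFC too). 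So the factor congruences attached to a central element are simultaneously principal on the $\vec1$-side and on the $\vec0$-side.

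**Plan.**

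First I would establish what $f$ does to the pair of factor congruences of a central element. Since $f$ preserves complementary central elements, $f(\vec e)\in Z(B)$ and its associated factor congruences are $\theta^B_{\vec1,f(\vec e)}=\theta^B(\vec1,f(\vec e))$ and $\theta^B_{\vec0,f(\vec e)}=\theta^B(\vec0,f(\vec e))$. By Lemma \ref{useful lema Malsev}, the $f$-image of $\theta^A(\vec1,\vec e)$ lands inside $\theta^B(\vec1,f(\vec e))$, and symmetrically on the $\vec0$-side; this is the transport mechanism that will let me push the defining equations for $\wedge_A,\vee_A,{}^{c_A}$ from $A$ across to $B$.

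Next I would handle the complement. By definition $\vec e^{c_A}$ is the unique solution of $[\vec z,\vec1]\in\theta^A_{\vec0,\vec e}$ and $[\vec z,\vec0]\in\theta^A_{\vec1,\vec e}$; i.e. $\vec e\diamond_A\vec e^{c_A}$. Stability by complements gives $f(\vec e)\diamond_B f(\vec e^{c_A})$, and uniqueness of complements in the Boolean algebra $\mathbf Z(B)$ forces $f(\vec e^{c_A})=f(\vec e)^{c_B}$.

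Then I would treat $\wedge$ (and dually $\vee$). Write $\vec a=\vec e\wedge_A\vec f$. By Lemma \ref{Useful lema Centrals}(1), $[\vec0,\vec a]\in\theta^A_{\vec0,\vec e}=\theta^A(\vec0,\vec e)$ and $[\vec a,\vec f]\in\theta^A_{\vec1,\vec e}=\theta^A(\vec1,\vec e)$. Apply $f$ and Lemma \ref{useful lema Malsev} to get $[\vec0,f(\vec a)]\in\theta^B(\vec0,f(\vec e))=\theta^B_{\vec0,f(\vec e)}$ and $[f(\vec a),f(\vec f)]\in\theta^B(\vec1,f(\vec e))=\theta^B_{\vec1,f(\vec e)}$. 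These are exactly the two conditions of Lemma \ref{Useful lema Centrals}(1) characterizing $f(\vec e)\wedge_B f(\vec f)$ in $B$; by uniqueness of solutions to such systems (the relevant meet of factor congruences is $\Delta^B$, so the system has at most one solution), I conclude $f(\vec a)=f(\vec e)\wedge_B f(\vec f)$, i.e. $f(\vec e\wedge_A\vec f)=f(\vec e)\wedge_B f(\vec f)$. The case of $\vee$ is identical using Lemma \ref{Useful lema Centrals}(2).

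**Main obstacle.** The one step that is not purely mechanical is justifying, in $B$, that the transported equations genuinely \emph{pin down} $f(\vec a)$: I must know that $f(\vec e)$ really is central in $B$ with factor congruences given precisely by the principal congruences $\theta^B(\vec0,f(\vec e))$ and $\theta^B(\vec1,f(\vec e))$, so that Lemma \ref{Useful lema Centrals} applies verbatim to $B$ and the uniqueness of the system's solution kicks in. This is exactly where stability by complements is indispensable: without it $f(\vec e)$ need not be central (see Remark \ref{SC and CSC are not trivial}), the equations in $B$ would not characterize any meet, and the argument collapses. Once centrality in $B$ is secured via stability by complements together with FHP, everything else is bookkeeping with Lemmas \ref{Useful lema Centrals} and \ref{useful lema Malsev}.
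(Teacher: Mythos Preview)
Your proposal is correct and follows essentially the same route as the paper: use Corollary~\ref{RexDFC and SCC implies TexDFC} together with Lemma~\ref{Definability by principal congruences} and Remark~\ref{Definability LexDFC} to identify both factor congruences of a central element with principal congruences, then transport the characterizing conditions of Lemma~\ref{Useful lema Centrals} along $f$ via Lemma~\ref{useful lema Malsev} and read off the meet (and dually the join) in $B$. The only minor difference is that you treat the complement explicitly, whereas the paper omits it (reasonably, since a bounded lattice homomorphism between Boolean algebras automatically preserves complements); also, your mention of FHP is not actually needed in the argument, and the paper does not invoke it here.
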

\begin{proof}
First of all, observe that from Lemma \ref{Definability by principal congruences}, Remark \ref{Definability LexDFC} and Corollary \ref{RexDFC and SCC implies TexDFC}, we get that for every $\vec{e}\in A$, $\theta^{A}_{\vec{0},\vec{e}}=\theta^{A}(\vec{0},\vec{e})$ and $\theta^{A}_{\vec{1},\vec{e}}=\theta^{A}(\vec{1},\vec{e})$. Now, since $f$ is a homomorphism, it is clear that preserves $\vec{0}$ and $\vec{1}$. So, if $\vec{e}_{1},\vec{e}_{2}\in Z(A)$, and $\vec{a}=\vec{e}_{1}\wedge_{A}\vec{e}_{2}$, thus from Lemma \ref{Useful lema Centrals}, $[\vec{0},\vec{a}]\in \theta^{A}(\vec{0},\vec{e}_{1})$ and $[\vec{a},\vec{e}_{2}]\in \theta^{A}(\vec{1},\vec{e}_{1})$. By Remark \ref{SC and CSC are not trivial}, $f(\vec{e})\in Z(A)$ for every $\vec{e}\in A$, thus from Lemma \ref{useful lema Malsev} we get that $[\vec{0},f(\vec{a})]\in \theta^{A}(\vec{0},f(\vec{e}_{1}))$ and $[f(\vec{a}),f(\vec{e}_{2})]\in \theta^{B}(\vec{1},f(\vec{e}_{1}))$. Hence, again by Lemma \ref{Useful lema Centrals} we can conclude that $f(a)=f(\vec{e}_{1})\wedge_{B}f(\vec{e}_{2})$. The proof for the preservation of the join is similar. This concludes the proof.
\end{proof}

\subsection*{Acknowledgments}

I would like to thank especially to Prof. Diego Vaggione for his
guidance during the study of the Theory of Central Elements and also for his
useful suggestions on this manuscript.

\end{document}